\newcommand{\ZZ}{\mathbb{Z}}
\newcommand{\NN}{\mathbb{N}}
\newcommand{\FF}{\mathbb{F}}
\newtheorem{theorem}{Theorem}[section]
\newtheorem{lemma}[theorem]{Lemma}
\newtheorem{proposition}[theorem]{Proposition}
\newtheorem{corollary}[theorem]{Corollary}
\theoremstyle{definition}
\newtheorem{definition}[theorem]{Definition}
\newtheorem{remark}[theorem]{Remark}
\newcommand{\vct}[1]{\mathbf{#1}}
\newcommand{\zint}{\,..\,}
\newcommand{\nth}{^{\text{th}}}
\DeclareMathOperator{\moddec}{mod}
\renewcommand{\mod}[1]{\,(\moddec #1)}
\newcommand{\clabel}[2]{\protected@write \@auxout {}{\string \newlabel {#1}{{#2}{\thepage}{#2}{#1}{}} }\hypertarget{#1}{}}
\newcommand{\lb}{\allowbreak}
\newcommand{\breaklist}[2][,\lb]{\def\nextitem{\def\nextitem{#1}}\renewcommand*{\do}[1]{\nextitem{##1}}\docsvlist{#2}}
\DeclareMathOperator{\cirdec}{circ}		
\newcommand{\cir}[1]{\cirdec(\breaklist{#1})}
\newcommand{\pcir}[2]{\cirdec_{#1}(\breaklist{#2})}
\newcommand{\floor}[1]{\lfloor#1\rfloor}
\newcommand{\BM}[1]{\boldmath {$#1$}\unboldmath}
\DeclareMathOperator{\autdec}{Aut}
\newcommand{\aut}[1]{\autdec(#1)}
\newcommand{\vctg}[1]{\boldsymbol{#1}}
\renewcommand*\env@matrix[1][*\c@MaxMatrixCols c]{\hskip -\arraycolsep\let\@ifnextchar\new@ifnextchar\array{#1}}
\providecommand{\keywords}[1]{\small\textit{Keywords}: #1}
\providecommand{\msc}[1]{\small\textit{2020 MSC}: #1}
\title{New binary self-dual codes of lengths 56, 58, 64, 80 and 92 from a modification of the four circulant construction}
\author{J. Gildea, A. Korban and A. M. Roberts\\
Department of Mathematical and Physical Sciences\\
University of Chester\\
Thornton Science Park\\
Chester CH2 4NU\\
United Kingdom
}
\date{}
\begin{document}

\maketitle

\keywords{Self-dual codes, Codes over rings, $\lambda$-circulant matrix, Extremal codes, Optimal codes, Best known codes}

\msc{94B05, 15B10, 15B33}

\let\thefootnote\relax\footnote{E-mail addresses: \href{mailto:j.gildea@chester.ac.uk}{j.gildea@chester.ac.uk} (J. Gildea), \href{mailto:adrian3@windowslive.com}{adrian3@windowslive.com} (A. Korban), \href{mailto:adammichaelroberts@outlook.com}{adammichaelroberts@outlook.com} (A. M. Roberts)}

\begin{abstract}
In this work, we give a new technique for constructing self-dual codes over commutative Frobenius rings using $\lambda$-circulant matrices. The new construction was derived as a modification of the well-known four circulant construction of self-dual codes. Applying this technique together with the building-up construction, we construct singly-even binary self-dual codes of lengths 56, 58, 64, 80 and 92 that were not known in the literature before. Singly-even self-dual codes of length 80 with $\beta\in\{2,4,5,6,8\}$ in their weight enumerators are constructed for the first time in the literature.
\end{abstract}

\maketitle

\section{Introduction}

Self-dual codes are types of linear codes which possess many interesting properties and are closely related to many other mathematical structures. Much research in particular has been invested into developing techniques for constructing new extremal binary self-dual codes. One of the most well-known and extensively applied of the techniques is the four circulant construction, which was first introduced in \cite{1} and uses a matrix $G$ defined by
	\begin{equation*}
	G=
	\begin{pmatrix}[c|c]
	I_{2n} & X
	\end{pmatrix},\quad\text{where }
	X=
	\begin{pmatrix}
	A & B\\-B^T & A^T
	\end{pmatrix},
	\end{equation*}
and where $A$ and $B$ are circulant matrices. It follows that $G$ is a generator matrix of a self-dual code of length $4n$ if and only if $AA^T+BB^T=-I_n$. In this work, we give a modification of the four circulant construction which we apply to construct extremal, optimal and best known binary self-dual codes that have previously not been known to exist. The new technique can be used to construct self-dual codes over any commutative Frobenius ring $R$. We introduce this technique and provide the conditions needed to produce a self-dual code.

For the proof of this technique, we utilise a specialised mapping $\Theta$ which was used in \cite{2}. This mapping is inherently associated with the matrix product $BA^T$, where $A$ and $B$ are $\lambda$-circulant matrices over $R$ such that $\lambda^2=1$. If $A$ is the $\lambda$-circulant matrix generated by $\vct{a}\in R^n$, then using $\Theta$ allows us to verify the equality $AA^T=-I_n$ by computing the values of $\floor{n/2}+1$ quantities in terms of $\vct{a}$. This eliminates the need to construct $A$ from its generating vector as well as computing the matrix product $AA^T$, which improves computational efficiency. We give and prove our own results concerning $\Theta$ as done so in \cite{2}.

Using the new technique together with the building-up construction, we find many self-dual codes with weight enumerator parameters of previously unknown values (relative to referenced sources). In total, 93 new codes are found, including
	\begin{enumerate}[label=$\bullet$]
	\item 29 singly-even binary self-dual $[56,28,10]$ codes;
	\item 1 binary self-dual $[58,29,10]$ code;
	\item 1 singly-even binary self-dual $[64,32,12]$ code;
	\item 50 singly-even binary self-dual $[80,40,14]$ codes;
	\item 12 binary self-dual $[92,46,16]$ codes.
	\end{enumerate}

The rest of the work is organised as follows. In Section 2, we give preliminary definitions and results on self-dual codes, Gray maps, circulant matrices, the specialised mapping $\Theta$ and the alphabets which we use. In Section 3, we introduce the new technique and conditions needed for producing a self-dual code. In Section 4, we apply the new technique and the building-up construction to obtain the new self-dual codes of lengths 56, 58, 64, 80 and 92, whose weight enumerator parameter values and automorphism group orders we detail. We also tabulate the results in this section. We finish with concluding remarks and discussion of possible expansion on this work.

\section{Preliminaries}

\subsection{Self-Dual Codes}

Let $R$ be a commutative Frobenius ring (see \cite{3} for a full description of Frobenius rings and codes over Frobenius rings). Throughout this work, we always assume $R$ has unity. A code $\mathcal{C}$ of length $n$ over $R$ is a subset of $R^n$ whose elements are called codewords. If $\mathcal{C}$ is a submodule of $R^n$, then we say that $\mathcal{C}$ is linear. Let $\mathbf{x},\mathbf{y}\in R^n$ where $\mathbf{x}=(x_1,x_2,\dots,x_n)$ and $\mathbf{y}=(y_1,y_2,\dots,y_n)$. The (Euclidean) dual $\mathcal{C}^{\bot}$ of $\mathcal{C}$ is given by
	\begin{equation*}
	\mathcal{C}^{\bot}=\{\mathbf{x}\in R^n: \langle\mathbf{x},\mathbf{y}\rangle=0,\forall\mathbf{y}\in\mathcal{C}\},
	\end{equation*}	
where $\langle,\rangle$ denotes the Euclidean inner product defined by
	\begin{equation*}
	\langle\mathbf{x},\mathbf{y}\rangle=\sum_{i=1}^nx_iy_i.
	\end{equation*}

We say that $\mathcal{C}$ is self-orthogonal if $\mathcal{C}\subseteq \mathcal{C}^\perp$ and self-dual if $\mathcal{C}=\mathcal{C}^{\bot}$.

An upper bound on the minimum (Hamming) distance of a doubly-even (Type II) binary self-dual code was given in \cite{5} and likewise for a singly-even (Type I) binary self-dual code in \cite{4}. Let $d_{\text{I}}(n)$ and $d_{\text{II}}(n)$ be the minimum distance of a Type I and Type II binary self-dual code of length $n$, respectively. Then
	\begin{equation*}
	d_{\text{II}}(n)\leq 4\floor{n/24}+4
	\end{equation*}
and
	\begin{equation*}
	d_{\text{I}}(n)\leq
	\begin{cases}
	4\floor{n/24}+2,& \text{if }n\equiv 0\pmod{24},\\
	4\floor{n/24}+4,& \text{if }n\not\equiv 22\pmod{24},\\
	4\floor{n/24}+6,& \text{if }n\equiv 22\pmod{24}.
	\end{cases}
	\end{equation*}

A self-dual code whose minimum distance meets its corresponding bound is called \textit{extremal}. A self-dual code with the highest possible minimum distance for its length is said to be \textit{optimal}. Extremal codes are necessarily optimal but optimal codes are not necessarily extremal. A \textit{best known} self-dual code is a self-dual code with the highest known minimum distance for its length.

\subsection{Alphabets}

In this paper, we consider the alphabets $\FF_2$, $\FF_2+u\FF_2$, $\FF_2+u\FF_2+v\FF_2+uv\FF_2$, $\FF_4$ and $\FF_4+u\FF_4$.

Define
	\begin{equation*}
	\FF_2+u\FF_2=\{a+bu:a,b\in\FF_2,u^2=0\}.
	\end{equation*}

Then $\FF_2+u\FF_2$ is a commutative ring of order 4 and characteristic 2 such that $\FF_2+u\FF_2\cong\FF_2[u]/\langle u^2\rangle$. 

Define
	\begin{equation*}
	\FF_2+u\FF_2+v\FF_2+uv\FF_2=\{a+bu+cv+duv:a,b,c,d\in\FF_2,u^2=v^2=uv+vu=0\}.
	\end{equation*}

Then $\FF_2+u\FF_2+v\FF_2+uv\FF_2$ is a commutative ring of order 16 and characteristic 2 such that $\FF_2+u\FF_2+v\FF_2+uv\FF_2\cong\FF_2[u,v]/\langle u^2,v^2,uv+vu\rangle$. Note that $\FF_2+u\FF_2+v\FF_2+uv\FF_2$ can be viewed as an extension of $\FF_2+u\FF_2$ and so we can also define
	\begin{equation*}
	\FF_2+u\FF_2+v\FF_2+uv\FF_2=\{a+bv: a,b\in \FF_2+u\FF_2,v^2=0\}.
	\end{equation*}

We define $\FF_4\cong\FF_2[\omega]/\langle \omega^2+\omega+1\rangle$ so that
	\begin{equation*}
	\FF_4=\{a{\omega}+b(1+\omega): a,b\in\FF_2,\omega^2+\omega+1=0\}.
	\end{equation*}

Define
	\begin{equation*}
	\FF_4+u\FF_4=\{a+bu: a,b\in\FF_4,u^2=0\}.
	\end{equation*}

Then $\FF_4+u\FF_4$ is a commutative ring of order 16 and characteristic 2 such that $\FF_4+u\FF_4\cong\FF_4[u]/\langle u^2\rangle\cong\FF_2[\omega,u]/\langle \omega^2+\omega+1,u^2,\omega u+u\omega\rangle$. Note that $\FF_4+u\FF_4$ can be viewed as an extension of $\FF_2+u\FF_2$ and so we can also define
	\begin{equation*}
	\FF_4+u\FF_4=\{a\omega+b(1+\omega): a,b\in \FF_2+u\FF_2,\omega^2+\omega+1=0\}.
	\end{equation*}

We recall the following Gray maps from \cite{6,7,8,35}:
	\begin{align*}
	\varphi_{\FF_2+u\FF_2}&:(\FF_2+u\FF_2)^n\to\FF_2^{2n}\\
        &\quad a+bu\mapsto(b,a+b),\,a,b\in\FF_2^{n},\\[6pt]
	\varphi_{\FF_4+u\FF_4}&:(\FF_4+u\FF_4)^n\to\FF_4^{2n}\\
		&\quad a+bu\mapsto(b,a+b),\,a,b\in\FF_4^{n},\\[6pt]
	\phi_{\FF_2+u\FF_2+v\FF_2+uv\FF_2}&:(\FF_2+u\FF_2+v\FF_2+uv\FF_2)^n\to (\FF_2+u\FF_2)^{2n}\\
		&\quad a+bv\mapsto(b,a+b),\,a,b\in (\FF_2+u\FF_2)^{n},\\[6pt]
	\psi_{\FF_4}&:\FF_4^n\to\FF_2^{2n}\\
		&\quad a\omega+b(1+\omega)\mapsto(a,b),\,a,b\in\FF_2^n,\\[6pt]
	\psi_{\FF_4+u\FF_4}&:(\FF_4+u\FF_4)^n\to (\FF_2+u\FF_2)^{2n}\\
		&\quad a\omega+b(1+\omega)\mapsto(a,b),\,a,b\in (\FF_2+u\FF_2)^n. 
	\end{align*}

Note that these Gray maps preserve orthogonality in their respective alphabets (see \cite{8} for details). If $\mathcal{C}\subseteq (\FF_4+u\FF_4)^n$, then the binary codes $\varphi_{\FF_2+u\FF_2}\circ\psi_{\FF_4+u\FF_4}(\mathcal{C})$ and $\psi_{\FF_4}\circ\varphi_{\FF_4+u\FF_4}(\mathcal{C})$ are equivalent to each
other (see \cite{7,8} for details). The Lee weight of a codeword is defined to be the Hamming weight of its binary image under any of the previously mentioned compositions of maps. A self-dual code in $R^n$ where $R$ is equipped with a Gray map to the binary Hamming space is said to be of Type II if the Lee weights of all codewords are multiples of 4, otherwise it is said to be of Type I.
	\begin{proposition}\label{prop-1}\textup{(\cite{8})}
	Let $\mathcal{C}$ be a code over $\FF_4+u\FF_4$. If $\mathcal{C}$ is self-orthogonal, then $\psi_{\FF_4+u\FF_4}(\mathcal{C})$ and $\varphi_{\FF_4+u\FF_4}(\mathcal{C})$ are self-orthogonal. The code $\mathcal{C}$ is a Type I (resp. Type II) code over $\FF_4+u\FF_4$ if and only if $\varphi_{\FF_4+u\FF_4}(\mathcal{C})$ is a Type I (resp. Type II) $\FF_4$-code if and only if $\psi_{\FF_4+u\FF_4}(\mathcal{C})$ is a Type I (resp. Type II) $(\FF_2+u\FF_2)$-code. Furthermore, the minimum Lee weight of $\mathcal{C}$ is the same as the minimum Lee weight of $\psi_{\FF_4+u\FF_4}(\mathcal{C})$ and $\varphi_{\FF_4+u\FF_4}(\mathcal{C})$.
	\end{proposition}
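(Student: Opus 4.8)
The plan is to reduce everything to unwinding the definitions of the Gray maps, plus one cardinality count to upgrade self-orthogonality to self-duality. The guiding observation is that each of the two binary images used to define the Lee weight of a word over $\FF_4+u\FF_4$ is, on the nose, the binary image that defines the Lee weight of one of the two images $\psi_{\FF_4+u\FF_4}(x)$ or $\varphi_{\FF_4+u\FF_4}(x)$. For all assertions beyond the first we may take $\mathcal{C}$ to be linear, since Type~I and Type~II codes are by definition self-dual and hence linear.

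I would begin with orthogonality. By the discussion preceding the statement (see also \cite{8}), and as one may check directly by expanding $\langle x,y\rangle$ in the basis $\{\omega,1+\omega\}$ of $\FF_4+u\FF_4$ over $\FF_2+u\FF_2$ (respectively in $\{1,u\}$) and comparing with $\langle\psi_{\FF_4+u\FF_4}(x),\psi_{\FF_4+u\FF_4}(y)\rangle$ (respectively $\langle\varphi_{\FF_4+u\FF_4}(x),\varphi_{\FF_4+u\FF_4}(y)\rangle$), both maps send an orthogonal pair of vectors to an orthogonal pair. Since both maps are injective and linear over the relevant ring, $\mathcal{C}\subseteq\mathcal{C}^{\bot}$ immediately yields $\psi_{\FF_4+u\FF_4}(\mathcal{C})\subseteq\psi_{\FF_4+u\FF_4}(\mathcal{C})^{\bot}$ and $\varphi_{\FF_4+u\FF_4}(\mathcal{C})\subseteq\varphi_{\FF_4+u\FF_4}(\mathcal{C})^{\bot}$, which is the first assertion. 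To obtain the self-dual correspondence, I would note that orthogonality preservation also gives the inclusion $\psi_{\FF_4+u\FF_4}(\mathcal{C}^{\bot})\subseteq\psi_{\FF_4+u\FF_4}(\mathcal{C})^{\bot}$ (and similarly for $\varphi_{\FF_4+u\FF_4}$); since $|D|\cdot|D^{\bot}|$ equals the size of the ambient space for any linear code $D$ over a finite Frobenius ring (see \cite{3}), the two sides have equal cardinality, so in fact $\psi_{\FF_4+u\FF_4}(\mathcal{C}^{\bot})=\psi_{\FF_4+u\FF_4}(\mathcal{C})^{\bot}$ and $\varphi_{\FF_4+u\FF_4}(\mathcal{C}^{\bot})=\varphi_{\FF_4+u\FF_4}(\mathcal{C})^{\bot}$. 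Injectivity then gives that $\mathcal{C}$ is self-dual if and only if $\psi_{\FF_4+u\FF_4}(\mathcal{C})$ is self-dual if and only if $\varphi_{\FF_4+u\FF_4}(\mathcal{C})$ is self-dual.

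Next I would handle the Lee weights. By definition the Lee weight of $x\in(\FF_4+u\FF_4)^n$ equals the Hamming weight of $\varphi_{\FF_2+u\FF_2}(\psi_{\FF_4+u\FF_4}(x))$, and also of $\psi_{\FF_4}(\varphi_{\FF_4+u\FF_4}(x))$. The first of these is exactly the binary word whose Hamming weight defines the Lee weight of $\psi_{\FF_4+u\FF_4}(x)\in(\FF_2+u\FF_2)^{2n}$, and the second is exactly the binary word whose Hamming weight defines the Lee weight of $\varphi_{\FF_4+u\FF_4}(x)\in\FF_4^{2n}$. Hence, for every $x$, the Lee weight of $x$ over $\FF_4+u\FF_4$ equals the Lee weight of $\psi_{\FF_4+u\FF_4}(x)$ over $\FF_2+u\FF_2$, which equals the Lee weight of $\varphi_{\FF_4+u\FF_4}(x)$ over $\FF_4$. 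Since $\psi_{\FF_4+u\FF_4}$ and $\varphi_{\FF_4+u\FF_4}$ restrict to bijections $\mathcal{C}\to\psi_{\FF_4+u\FF_4}(\mathcal{C})$ and $\mathcal{C}\to\varphi_{\FF_4+u\FF_4}(\mathcal{C})$ that fix $\vct{0}$ and preserve the Lee weight, the three codes carry the same multiset of Lee weights; this gives equality of the minimum Lee weights, and, combined with the self-dual correspondence from the previous step, the Type~I/Type~II statement, since being of Type~II means precisely that every Lee weight is a multiple of $4$.

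I expect the only genuinely delicate point to be the claim, used at the start of the Lee-weight step, that for a fixed word the two composed maps $\varphi_{\FF_2+u\FF_2}\circ\psi_{\FF_4+u\FF_4}$ and $\psi_{\FF_4}\circ\varphi_{\FF_4+u\FF_4}$ produce binary words that are coordinate permutations of one another, so that ``the Lee weight'' is well defined and may be read off from whichever composition is convenient; this is the content of \cite{7,8}. The other point worth care is the cardinality identity $|D|\cdot|D^{\bot}|=|R|^{N}$ for a linear code $D$ of length $N$ over a finite Frobenius ring $R$, which is what turns the one-way statement ``self-orthogonal maps to self-orthogonal'' into the two-way self-dual correspondence required for the Type~I/Type~II equivalences. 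Everything else is bookkeeping with the definitions of the Gray maps involved.
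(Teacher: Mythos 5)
The paper does not actually prove this proposition: it is imported verbatim from reference \cite{8} (and the surrounding text only records its consequences), so there is no in-paper argument to compare yours against. Judged on its own, your proposal is correct and is essentially the standard proof one finds in the cited source: orthogonality preservation by expanding the Euclidean inner product in the bases $\{1,u\}$ and $\{\omega,1+\omega\}$ (both checks go through in characteristic $2$; e.g.\ for $\varphi_{\FF_4+u\FF_4}$ one gets $\langle\varphi(x),\varphi(y)\rangle=\sum(a_ic_i+a_id_i+b_ic_i)$, which vanishes when $\langle x,y\rangle=\sum a_ic_i+(\sum(a_id_i+b_ic_i))u=0$); the Frobenius cardinality identity $|D|\cdot|D^{\bot}|=|R|^{N}$ to upgrade the inclusion $\psi(\mathcal{C}^{\bot})\subseteq\psi(\mathcal{C})^{\bot}$ to equality; and the observation that the two compositions $\varphi_{\FF_2+u\FF_2}\circ\psi_{\FF_4+u\FF_4}$ and $\psi_{\FF_4}\circ\varphi_{\FF_4+u\FF_4}$ compute, coordinate-permutation-equivalently, both the Lee weight of $x$ and the Lee weights of its two intermediate images. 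One small point worth making explicit: in the backward directions of the Type I/II equivalence you quietly assume $\mathcal{C}$ is linear (so that $|\mathcal{C}|\cdot|\mathcal{C}^{\bot}|=|R|^{n}$ applies); since a Type I/II code is by definition self-dual, and the equivalence is only ever invoked for such $\mathcal{C}$, this is harmless, but it deserves a sentence. Otherwise the argument is complete.
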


The next corollary follows immediately from Proposition \ref{prop-1}.
	\begin{corollary}
	Let $\mathcal{C}$ be a self-dual code over $\FF_4+u\FF_4$ of length $n$ and minimum Lee distance $d$. Then $\varphi_{\FF_2+u\FF_2}\circ\psi_{\FF_4+u\FF_4}(\mathcal{C})$ is a binary self-dual $[4n,2n,d]$ code. Moreover, the Lee weight enumerator of $\mathcal{C}$ is equal to the Hamming weight enumerator of $\varphi_{\FF_2+u\FF_2} \circ \psi_{\FF_4+u\FF_4}(\mathcal{C})$. If $\mathcal{C}$ is a Type I (resp. Type II) code, then $\varphi_{\FF_2+u\FF_2}\circ\psi_{\FF_4+u\FF_4}(\mathcal{C})$ is a Type I (resp. Type II) code.
	\end{corollary}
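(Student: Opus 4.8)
The plan is to chain the two Gray maps $\psi_{\FF_4+u\FF_4}$ and $\varphi_{\FF_2+u\FF_2}$, using Proposition~\ref{prop-1} for the inner map and the orthogonality-preserving property of $\varphi_{\FF_2+u\FF_2}$ recorded above for the outer one. First I would observe that both Gray maps are injective and additive, hence carry linear codes to linear codes of the appropriate lengths, and that, since $\FF_4+u\FF_4$ is a Frobenius ring of order $16$ and $\mathcal{C}=\mathcal{C}^{\bot}$, the cardinality identity $|\mathcal{C}|\cdot|\mathcal{C}^{\bot}|=|\FF_4+u\FF_4|^n$ for codes over Frobenius rings gives $|\mathcal{C}|^2=16^n$, so $|\mathcal{C}|=4^n$. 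Writing $\mathcal{D}=\varphi_{\FF_2+u\FF_2}\circ\psi_{\FF_4+u\FF_4}(\mathcal{C})$, injectivity of both maps then yields $|\mathcal{D}|=4^n=2^{2n}$ and length $4n$; in particular $\dim\mathcal{D}=2n$.

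For self-duality: $\mathcal{C}$ self-dual implies $\mathcal{C}$ self-orthogonal, so by Proposition~\ref{prop-1} the code $\psi_{\FF_4+u\FF_4}(\mathcal{C})$ is self-orthogonal over $\FF_2+u\FF_2$, and since $\varphi_{\FF_2+u\FF_2}$ preserves orthogonality, $\mathcal{D}$ is self-orthogonal over $\FF_2$. A self-orthogonal binary code of length $4n$ and dimension $2n$ satisfies $\dim\mathcal{D}^{\bot}=4n-2n=\dim\mathcal{D}$ together with $\mathcal{D}\subseteq\mathcal{D}^{\bot}$, hence $\mathcal{D}=\mathcal{D}^{\bot}$; thus $\mathcal{D}$ is a binary self-dual $[4n,2n]$ code.

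For the minimum distance and the weight enumerator: by the definition of the Lee weight on $(\FF_4+u\FF_4)^n$, the Lee weight of each $c\in\mathcal{C}$ equals the Hamming weight of $\varphi_{\FF_2+u\FF_2}\circ\psi_{\FF_4+u\FF_4}(c)$ (equivalently one cites the final sentence of Proposition~\ref{prop-1} together with the definition of the Lee weight on $(\FF_2+u\FF_2)^{2n}$). Since $\varphi_{\FF_2+u\FF_2}\circ\psi_{\FF_4+u\FF_4}$ restricts to a weight-preserving bijection $\mathcal{C}\to\mathcal{D}$, the minimum Lee distance $d$ of $\mathcal{C}$ equals the minimum Hamming distance of $\mathcal{D}$, giving parameters $[4n,2n,d]$, and the Lee weight enumerator of $\mathcal{C}$ agrees term-by-term with the Hamming weight enumerator of $\mathcal{D}$.

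Finally, for the type: $\mathcal{D}$ is Type II exactly when every Hamming weight in $\mathcal{D}$ — equivalently every Lee weight in $\mathcal{C}$, by the weight-preservation just noted — is a multiple of $4$, which is precisely the defining condition for $\mathcal{C}$ to be Type II; the Type I case follows at once since both $\mathcal{C}$ and $\mathcal{D}$ have already been shown to be self-dual. (Alternatively, Proposition~\ref{prop-1} transfers the type from $\mathcal{C}$ to $\psi_{\FF_4+u\FF_4}(\mathcal{C})$ over $\FF_2+u\FF_2$, and the definition of the Lee weight over $\FF_2+u\FF_2$ transfers it onward to $\mathcal{D}$.) I do not anticipate a real obstacle: the only points needing care are the cardinality bookkeeping that pins down $\dim\mathcal{D}=2n$, so that self-orthogonality can be upgraded to self-duality, and the verification that the nested definitions of the Lee weight are mutually consistent so that weights are genuinely preserved along the whole composition.
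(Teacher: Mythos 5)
Your argument is correct and follows the same route the paper intends: the paper gives no written proof, stating only that the corollary follows immediately from Proposition \ref{prop-1}, and your write-up simply fills in the standard details (the Frobenius cardinality count $|\mathcal{C}|^2=16^n$ to upgrade self-orthogonality to self-duality, and the definition of the Lee weight to transfer distances, weight enumerators and type).
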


We also have the following proposition from \cite{36}:
	\begin{proposition}\textup{(\cite{36})}
	Let $\mathcal{C}$ be a self-dual code over $\FF_2+u\FF_2+v\FF_2+uv\FF_2$ of length $n$ and minimum Lee distance $d$. Then $\varphi_{\FF_2+u\FF_2}\circ\phi_{\FF_2+u\FF_2+v\FF_2+uv\FF_2}(\mathcal{C})$ is a binary $[4n,2n,d]$ self-dual code. Moreover, the Lee weight enumerator of $\mathcal{C}$ is equal to the Hamming weight enumerator of $\varphi_{\FF_2+u\FF_2}\circ\phi_{\FF_2+u\FF_2+v\FF_2+uv\FF_2}(\mathcal{C})$. If $\mathcal{C}$ is a Type I (resp. Type II) code, then $\varphi_{\FF_2+u\FF_2}\circ\phi_{\FF_2+u\FF_2+v\FF_2+uv\FF_2}(\mathcal{C})$ is a Type I (resp. Type II) code.
	\end{proposition}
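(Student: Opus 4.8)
The plan is to show that the composition $\Phi := \varphi_{\FF_2+u\FF_2}\circ\phi_{\FF_2+u\FF_2+v\FF_2+uv\FF_2}$ is an $\FF_2$-linear bijection from $(\FF_2+u\FF_2+v\FF_2+uv\FF_2)^n$ onto $\FF_2^{4n}$ that carries orthogonal pairs to orthogonal pairs, and then to read off the remaining claims from a cardinality count and from the definition of the Lee weight. First I would record that, coordinatewise, the rule $a+bv\mapsto(b,a+b)$ with $a,b\in\FF_2+u\FF_2$ defines an additive bijection of $(\FF_2+u\FF_2)^2$ onto itself, and the rule $a+bu\mapsto(b,a+b)$ with $a,b\in\FF_2$ an additive bijection of $\FF_2^2$; hence $\phi_{\FF_2+u\FF_2+v\FF_2+uv\FF_2}$ and $\varphi_{\FF_2+u\FF_2}$, and therefore $\Phi$, are $\FF_2$-linear bijections. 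In particular $\Phi(\mathcal{C})$ is an $\FF_2$-subspace of $\FF_2^{4n}$ of the same size as $\mathcal{C}$.

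Next I would pin down the dimension. Since $\FF_2+u\FF_2+v\FF_2+uv\FF_2$ is a finite commutative Frobenius ring of order $16$ and $\mathcal{C}$ is self-dual of length $n$, the standard cardinality identity $\lvert\mathcal{C}\rvert\,\lvert\mathcal{C}^{\perp}\rvert=16^n$ for codes over Frobenius rings (see \cite{3}) gives $\lvert\mathcal{C}\rvert^2=16^n$, so $\lvert\mathcal{C}\rvert=4^n=2^{2n}$. Hence $\Phi(\mathcal{C})$ is a binary code of length $4n$ and $\FF_2$-dimension $2n$.

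For self-duality I would use that both Gray maps in the composition preserve orthogonality; this is already noted in the excerpt for the listed maps, and in any case is a short direct check: if $\langle\vct{x},\vct{y}\rangle=0$ over $\FF_2+u\FF_2$, then writing $x_iy_i=p_i+q_iu$ with $p_i,q_i\in\FF_2$ one finds $\langle\varphi_{\FF_2+u\FF_2}(\vct{x}),\varphi_{\FF_2+u\FF_2}(\vct{y})\rangle=\sum_i(p_i+q_i)=0$, and the computation for $\phi_{\FF_2+u\FF_2+v\FF_2+uv\FF_2}$ over the base ring $\FF_2+u\FF_2$ is identical. Orthogonality preservation is stable under composition, so $\mathcal{C}\subseteq\mathcal{C}^{\perp}$ forces $\Phi(\mathcal{C})\subseteq\Phi(\mathcal{C})^{\perp}$; combined with $\dim_{\FF_2}\Phi(\mathcal{C})=2n=\tfrac12(4n)$ this yields $\Phi(\mathcal{C})=\Phi(\mathcal{C})^{\perp}$. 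Thus $\Phi(\mathcal{C})$ is a binary self-dual code of length $4n$ and dimension $2n$.

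Finally I would handle the weights and the type. By the definition of the Lee weight on $(\FF_2+u\FF_2+v\FF_2+uv\FF_2)^n$ as the Hamming weight of the binary image under precisely such a composition of Gray maps, and since $\Phi$ is a bijection, the Lee weight enumerator of $\mathcal{C}$ equals the Hamming weight enumerator of $\Phi(\mathcal{C})$; in particular the minimum Lee distance $d$ of $\mathcal{C}$ is the minimum Hamming distance of $\Phi(\mathcal{C})$, giving a $[4n,2n,d]$ code. The Type I / Type II statement is then immediate: $\mathcal{C}$ is Type II if and only if every Lee weight is divisible by $4$, if and only if every Hamming weight of $\Phi(\mathcal{C})$ is divisible by $4$, i.e. the self-dual binary code $\Phi(\mathcal{C})$ is doubly-even (Type II), and otherwise both are Type I. I expect the only genuinely substantive points to be invoking the Frobenius-ring cardinality identity correctly and the (routine) verification that the Gray maps preserve orthogonality; everything else is formal bookkeeping with the two coordinatewise bijections.
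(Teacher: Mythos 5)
Your proof is correct and complete. Note that the paper itself offers no proof of this proposition --- it is quoted verbatim from reference \cite{36} --- so there is no in-paper argument to compare against; your write-up is the standard one and fills that gap cleanly. All the essential points are in place: the coordinatewise maps $a+bv\mapsto(b,a+b)$ and $a+bu\mapsto(b,a+b)$ are additive bijections, so the composition $\Phi$ is an $\FF_2$-linear bijection; the Frobenius-ring identity $\lvert\mathcal{C}\rvert\,\lvert\mathcal{C}^{\perp}\rvert=16^{n}$ pins the dimension at $2n$; the direct check that $\sum_i x_i y_i=0$ forces $\sum_i(p_i+q_i)=0$ gives self-orthogonality of the image, which together with the dimension count gives self-duality; and the statements about the weight enumerator, the minimum distance $d$, and Type I versus Type II then follow from the paper's definition of the Lee weight as the Hamming weight of the binary image under exactly this composition. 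The only mild caveat is that you lean on the cardinality identity for codes over Frobenius rings as a black box, but that is a legitimate citation (it is the content of the paper's reference \cite{3}) rather than a gap.
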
	

\subsection{Special Matrices}

We now recall the definitions and properties of some special matrices which we use in our work. Let $\vct{a}=(a_0,a_1,\ldots,a_{n-1})\in R^n$ where $R$ is a commutative ring and let
	\begin{equation*}
	A=\begin{pmatrix}
	a_0 & a_1 & a_2 & \cdots & a_{n-1}\\
	\lambda a_{n-1} & a_0 & a_1 & \cdots & a_{n-2}\\
	\lambda a_{n-2} & \lambda a_{n-1} & a_0 & \cdots & a_{n-3}\\
	\vdots & \vdots & \vdots & \ddots & \vdots\\
	\lambda a_1 & \lambda a_2 & \lambda a_3 & \cdots & a_0
	\end{pmatrix},
	\end{equation*}
where $\lambda\in R$. Then $A$ is called the $\lambda$-circulant matrix generated $\vct{a}$, denoted by $A=\pcir{\lambda}{\vct{a}}$. If $\lambda=1$, then $A$ is called the circulant matrix generated by $\vct{a}$ and is more simply denoted by $A=\cir{\vct{a}}$. If we define the matrix
	\begin{equation*}
	P_{\lambda}=\begin{pmatrix}
	\vct{0} & I_{n-1}\\
	\lambda & \vct{0}
	\end{pmatrix},
	\end{equation*}
then it follows that $A=\sum_{i=0}^{n-1}a_iP_{\lambda}^i$. Clearly, the sum of any two $\lambda$-circulant matrices is also a $\lambda$-circulant matrix. If $B=\pcir{\lambda}{\vct{b}}$ where $\vct{b}=(b_0,b_1,\ldots,b_{n-1})\in R^n$, then $AB=\sum_{i=0}^{n-1}\sum_{j=0}^{n-1}a_ib_jP_{\lambda}^{i+j}$. Since $P_{\lambda}^n=\lambda I_n$ there exist $c_k\in R$ such that $AB=\sum_{k=0}^{n-1}c_kP_{\lambda}^k$ so that $AB$ is also $\lambda$-circulant. In fact, it is true that
	\begin{equation*}
	c_{k}=\sum_{\substack{[i+j]_n=k\\i+j<n}}a_ib_j+\sum_{\substack{[i+j]_n=k\\i+j\geq n}}\lambda a_ib_j=\vct{x}_1\vct{y}_{k+1}
	\end{equation*}
for $k\in[0\zint n-1]$, where $\vct{x}_i$ and $\vct{y}_i$ respectively denote the $i\nth$ row and column of $A$ and $B$ and $[i+j]_n$ denotes the smallest non-negative integer such that $[i+j]_n\equiv i+j\mod{n}$. From this, we can see that $\lambda$-circulant matrices commute multiplicatively. Moreover, if $\lambda$ is a unit in $R$, then $A^T$ is $\lambda^{-1}$-circulant such that $A^T=a_0I_n+\lambda\sum_{i=1}^{n-1}a_{n-i}P_{\lambda^{-1}}^i$. It follows then that $AA^T$ is $\lambda$-circulant if and only if $\lambda$ is involutory in $R$, i.e. $\lambda^2=1$.

Let $J_n$ be an $n\times n$ matrix over $R$ whose $(i,j)\nth$ entry is $1$ if $i+j=n+1$ and 0 if otherwise. Then $J_n$ is called the $n\times n$ exchange matrix and corresponds to the row-reversed (or column-reversed) version of $I_n$. We see that $J_n$ is both symmetric and involutory, i.e. $J_n=J_n^T$ and $J_n^2=I_n$. For any matrix $A\in R^{m\times n}$, premultiplying $A$ by $J_m$ and postmultiplying $A$ by $J_n$ inverts the order in which the rows and columns of $A$ appear, respectively. Namely, the $(i,j)\nth$ entries of $J_mA$ and $AJ_n$ are the $([1-i]_m,j)\nth$ and $(i,[1-j]_n)\nth$ entries of $A$, respectively. Note that $[i+j]_n$ corresponds to the $(i+1,j+1)\nth$ entry of the matrix $J_nV$ where $V=\cir{n-1,0,1,\ldots,n-2}$ for $i,j\in[0\zint n-1]$.

\subsection{A Special Mapping}

We now introduce and explore the properties of a mapping which is inherently associated with the matrix product $BA^T$, where $A$ and $B$ are $\lambda$-circulant matrices such that $\lambda^2=1$. By utilising $\Theta$, we are able to improve the computational efficiency of our algorithms.
	\begin{definition}\textup{(\cite{2})}
	Let $R$ be a commutative ring and let $n\in\NN$ be fixed. Let $\Theta:R^n\times R^n\times\ZZ_n\to R$ be a mapping with an optional argument $\lambda\in R$ defined by
		\begin{equation*}
		\Theta(\vct{x},\vct{y},j)[\lambda]=\sum_{i=0}^{n-j-1}x_{[i+j]_n}y_i+\lambda\sum_{i=n-j}^{n-1}x_{[i+j]_n}y_i,
		\end{equation*}		
	where $\vct{x}=(x_0,x_1,\ldots,x_{n-1}),\vct{y}=(y_0,y_1,\ldots,y_{n-1})\in R^n$ and $j\in[0\zint n-1]$.
	
	If $j=0$, we define
		\begin{equation*}
		\Theta(\vct{x},\vct{y},0)=\sum_{i=0}^{n-1}x_iy_i=\vct{x}\vct{y}^T,
		\end{equation*}
	which is independent of $\lambda$.
	
	If $\lambda$ is unspecified, then we assume $\lambda=1$ so that
		\begin{equation*}
		\Theta(\vct{x},\vct{y},j)=\sum_{i=0}^{n-1}x_{[i+j]_n}y_i.
		\end{equation*}
	\end{definition}

	\begin{lemma}\label{lemm-1}\textup{(\cite{2})}
	Let $R$ be a commutative ring. Let $\vct{x},\vct{y}\in R^n$ and let $\lambda\in R:\lambda^2=1$. Then $\Theta(\vct{x},\vct{y},j)[\lambda]=\lambda\Theta(\vct{y},\vct{x},n-j)[\lambda]$, $\forall j\in[0\zint n-1]$.
	\end{lemma}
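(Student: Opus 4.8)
The plan is to prove the identity by a direct manipulation of the defining sums, tracking how the index set $[0\zint n-1]$ splits when we pass from $j$ to $n-j$. First I would dispose of the degenerate case $j=0$: both sides reduce to $\vct{x}\vct{y}^T=\vct{y}\vct{x}^T$ after noting $\Theta(\vct{y},\vct{x},n)$ should be read as $\Theta(\vct{y},\vct{x},0)$ (since the third argument lives in $\ZZ_n$), and the factor $\lambda$ disappears because $\Theta(\cdot,\cdot,0)$ is $\lambda$-independent and $\lambda^2=1$ forces $\lambda\cdot(\text{a }\lambda\text{-free quantity})$... actually here one must be slightly careful, so I would instead simply observe that for $j=0$ the claimed identity reads $\vct{x}\vct{y}^T = \lambda\,\Theta(\vct{y},\vct{x},0)=\lambda\,\vct{x}\vct{y}^T$, which need not hold unless $\lambda=1$; hence the intended reading is that $j$ ranges over $[1\zint n-1]$ for the nontrivial content, with $j=0$ handled by the convention $n-0\equiv 0$ so that both sides equal $\vct{x}\vct{y}^T$ and no stray $\lambda$ appears. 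I would state this reading explicitly at the start.

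For $j\in[1\zint n-1]$, write $k=n-j\in[1\zint n-1]$. The main step is to expand
	\begin{equation*}
	\Theta(\vct{y},\vct{x},k)[\lambda]=\sum_{i=0}^{n-k-1}y_{[i+k]_n}x_i+\lambda\sum_{i=n-k}^{n-1}y_{[i+k]_n}x_i,
	\end{equation*}
and, in each sum, substitute the summation index. In the first sum, where $0\le i\le n-k-1=j-1$, we have $i+k<n$ so $[i+k]_n=i+k=i+n-j$; setting $i'=i+n-j$ runs $i'$ over $[n-j\zint n-1]$ and gives a contribution $\sum_{i'=n-j}^{n-1}y_{i'}x_{i'-(n-j)}=\sum_{i'=n-j}^{n-1}x_{[i'+j]_n}y_{i'}$, since $i'-(n-j)\equiv i'+j\pmod n$ and $0\le i'-(n-j)\le j-1<n$. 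In the second sum, where $n-k=j\le i\le n-1$, we have $n\le i+k\le 2n-2$ so $[i+k]_n=i+k-n=i-j$; setting $i'=i-j$ runs $i'$ over $[0\zint n-1-j]$ and gives $\lambda\sum_{i'=0}^{n-j-1}y_{i'+j}x_{i'}=\lambda\sum_{i'=0}^{n-j-1}x_{[i'+j]_n}y_{i'}$. Multiplying through by $\lambda$ and using $\lambda^2=1$ on the first reindexed sum yields exactly $\lambda\sum_{i'=0}^{n-j-1}x_{[i'+j]_n}y_{i'}+\sum_{i'=n-j}^{n-1}x_{[i'+j]_n}y_{i'}$... wait — this has the $\lambda$ on the wrong block. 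Re-examining: $\lambda\cdot\Theta(\vct{y},\vct{x},k)$ contributes $\lambda\cdot(\text{first sum})+\lambda^2\cdot(\text{second sum})=\lambda\sum_{i'=n-j}^{n-1}x_{[i'+j]_n}y_{i'}+\sum_{i'=0}^{n-j-1}x_{[i'+j]_n}y_{i'}$, which is precisely $\Theta(\vct{x},\vct{y},j)[\lambda]$. So the identity follows once the index bookkeeping is done correctly.

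The only real obstacle is getting the two index substitutions and the placement of the $\lambda$-factor exactly right — the split at $i+k=n$ versus $i+k<n$ must be matched against the split in $\Theta(\vct{x},\vct{y},j)$ at $i+j=n$, and one must use commutativity of $R$ freely and $\lambda^2=1$ at exactly one point. I would present the computation as a short chain of equalities with the reindexing made explicit, and include the remark that this is the $\lambda$-circulant analogue of the classical fact $BA^T=\lambda(AB^T)^{\!*}$ under column reversal, which motivates why such an identity should hold.
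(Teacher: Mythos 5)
Your core computation for $j\in[1\zint n-1]$ is correct and is essentially the paper's own argument: the paper likewise expands $\lambda\Theta(\vct{y},\vct{x},n-j)[\lambda]$, uses $\lambda^2=1$ to move the unit from one block of the sum to the other, and reindexes via $x_i=x_{[i+j+(n-j)]_n}$; your two explicit substitutions $i'=i+(n-j)$ and $i'=i-j$ carry out the same relabelling, and your mid-proof self-correction about which block carries the factor $\lambda$ lands on the right answer.

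The one genuine defect is your treatment of $j=0$. You correctly observe that if $\Theta(\vct{y},\vct{x},n)$ is read as $\Theta(\vct{y},\vct{x},0)$ (reducing the third argument mod $n$ and invoking the $\lambda$-independent convention at $0$), the claimed identity becomes $\vct{x}\vct{y}^T=\lambda\,\vct{x}\vct{y}^T$, which fails for a general involutory $\lambda$. But your proposed repair --- ``the convention $n-0\equiv 0$ so that both sides equal $\vct{x}\vct{y}^T$ and no stray $\lambda$ appears'' --- is exactly the reading you have just shown \emph{does} produce a stray $\lambda$; as written it is self-contradictory and does not dispose of the $j=0$ case. The correct resolution is to evaluate $\Theta(\vct{y},\vct{x},n)[\lambda]$ by substituting $j=n$ into the general defining formula rather than reducing first: the block $\sum_{i=0}^{n-n-1}$ is empty and the remaining block is $\lambda\sum_{i=0}^{n-1}y_{[i+n]_n}x_i=\lambda\,\vct{y}\vct{x}^T$, so $\lambda\Theta(\vct{y},\vct{x},n)[\lambda]=\lambda^2\,\vct{x}\vct{y}^T=\vct{x}\vct{y}^T=\Theta(\vct{x},\vct{y},0)$. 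This is implicitly what the paper's chain of equalities does (its first sum is empty when $j=0$), though the paper does not comment on the point either. With that one sentence replaced, your proof is complete.
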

	
	\begin{proof}
	If $\vct{x}=(x_0,x_1,\ldots,x_{n-1})$, then $x_{[i+k]_n}=\tilde{x}_s$, where $\tilde{\vct{x}}$ is the vector $\vct{x}$ after being circularly shifted by $k$ places for some $k\in[0\zint n-1]$. If $\vct{y}=(y_0,y_1,\ldots,y_{n-1})$, then in a correspondence between the elements $x_i$ and $y_i$, inflicting a circular shift to both $\vct{x}$ and $\vct{y}$ by the same number of places preserves this correspondence. Thus, noting that $\lambda^2=1$ by assumption, we have
		\begin{align*}
		\lambda\Theta(\vct{y},\vct{x},n-j)[\lambda]&=\lambda\left(\sum_{i=0}^{n-(n-j)-1}y_{[i+(n-j)]_n}x_i+\lambda\sum_{i=n-(n-j)}^{n-1}y_{[i+(n-j)]_n}x_i\right)\\
		&=\lambda\sum_{i=0}^{j-1}y_{[i+(n-j)]_n}x_i+\sum_{i=j}^{n-1}y_{[i+(n-j)]_n}x_i\\
		&=\sum_{i=j}^{n-1}x_{[i+j+(n-j)]_n}y_{[i+(n-j)]_n}+\lambda\sum_{i=0}^{j-1}x_{[i+j+(n-j)]_n}y_{[i+(n-j)]_n}\\
		&=\sum_{i=0}^{n-j-1}x_{[i+j]_n}y_i+\lambda\sum_{i=n-j}^{n-1}x_{[i+j]_n}y_i\\
		&=\Theta(\vct{x},\vct{y},j)[\lambda].
		\end{align*}
	\end{proof}
	
	\begin{remark}\label{remark-1}
	In Lemma \ref{lemm-1}, suppose we want to calculate $f(j)=\Theta(\vct{x},\vct{x},j)[\lambda]$, $\forall j\in[0\zint n-1]$. We have $f(j)=\lambda\Theta(\vct{x},\vct{x},n-j)[\lambda]$ which, since $\lambda^2=1$, implies
		\begin{equation*}
		f(n-j)=\Theta(\vct{x},\vct{x},n-j)[\lambda]=\lambda\Theta(\vct{x},\vct{x},j)[\lambda]=\lambda f(j)
		\end{equation*}
	so that $f(j)=\lambda f(n-j)$. Therefore, to calculate $f(j)$ for $j\in[0\zint n-1]$, it is sufficient to determine $f(j)$ for $j\in[0\zint\floor{n/2}]$.
	\end{remark}
	
	\begin{lemma}\label{lemm-2}\textup{(\cite{2})}
	Let $R$ be a commutative ring. Let $A=\pcir{\lambda}{\vct{a}}$ and $B=\pcir{\lambda}{\vct{b}}$ with $\vct{a},\vct{b}\in R^n$ and $\lambda\in R:\lambda^2=1$. Then $BA^T=\pcir{\lambda}{v_0,v_1,\ldots,v_{n-1}}$, where $v_j=\Theta(\vct{b},\vct{a},j)[\lambda]$, $\forall j\in[0\zint n-1]$.
	\end{lemma}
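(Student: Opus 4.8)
The plan is to leverage two facts recorded in the preliminary discussion of $\lambda$-circulant matrices: that the product of two $\lambda$-circulant matrices over $R$ is again $\lambda$-circulant, and that if $\lambda$ is a unit then $A^T$ is $\lambda^{-1}$-circulant. Since $\lambda^2=1$, $\lambda$ is a unit with $\lambda^{-1}=\lambda$, so $A^T$ is $\lambda$-circulant; hence $BA^T$ is a product of two $\lambda$-circulant matrices and is therefore itself $\lambda$-circulant. A $\lambda$-circulant matrix is completely determined by its first row, so it suffices to show that for each $j\in[0\zint n-1]$ the $(1,j+1)\nth$ entry of $BA^T$ equals $\Theta(\vct b,\vct a,j)[\lambda]$.

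To do this I would first write $(BA^T)_{1,j+1}=\sum_{k=0}^{n-1}B_{1,k+1}(A^T)_{k+1,j+1}=\sum_{k=0}^{n-1}b_k A_{j+1,k+1}$, using that the first row of $B$ is $\vct b$ and that $(A^T)_{k+1,j+1}=A_{j+1,k+1}$. Reading the $(j+1)\nth$ row directly off the displayed form of $A=\pcir{\lambda}{\vct a}$ shows that $A_{j+1,k+1}=a_{[k-j]_n}$ when $k\geq j$ and $A_{j+1,k+1}=\lambda a_{[k-j]_n}$ when $k<j$. Substituting gives $(BA^T)_{1,j+1}=\sum_{k=j}^{n-1}b_k a_{[k-j]_n}+\lambda\sum_{k=0}^{j-1}b_k a_{[k-j]_n}$, and it remains to reindex each sum into the form appearing in the definition of $\Theta$. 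In the first sum I set $i=k-j$, so $k=i+j$ runs over $[0\zint n-j-1]$, $b_k=b_{[i+j]_n}$ and $a_{[k-j]_n}=a_i$; in the second I set $i=k-j+n$, so $i$ runs over $[n-j\zint n-1]$, $b_k=b_{[i+j]_n}$ (because $[i+j]_n=i+j-n=k$) and $a_{[k-j]_n}=a_i$. This turns the expression into $\sum_{i=0}^{n-j-1}b_{[i+j]_n}a_i+\lambda\sum_{i=n-j}^{n-1}b_{[i+j]_n}a_i=\Theta(\vct b,\vct a,j)[\lambda]$, with the case $j=0$ immediate since then the second sum is empty and the first is $\vct b\vct a^T=\Theta(\vct b,\vct a,0)$. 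Combined with the first paragraph, this yields $BA^T=\pcir{\lambda}{v_0,v_1,\ldots,v_{n-1}}$ with $v_j=\Theta(\vct b,\vct a,j)[\lambda]$.

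There is no deep obstacle here; the calculation is routine, and the only place needing care is the index bookkeeping — correctly separating the contribution of the entries that wrap around (those with $i+j\geq n$, which carry an extra factor of $\lambda$ from the structure of $A^T$) from those that do not, and tracking the reductions $[\cdot]_n$ throughout. It is precisely in collapsing the resulting stray factors $\lambda^2$ back to $1$ that the hypothesis $\lambda^2=1$ is used. As an alternative to reading off the row of $A$, one could instead note that $A^T=\pcir{\lambda}{a_0,\lambda a_{n-1},\ldots,\lambda a_1}$ and feed this generating vector directly into the coefficient formula for the product of two $\lambda$-circulant matrices recalled earlier, again simplifying with $\lambda^2=1$; this leads to the same identity.
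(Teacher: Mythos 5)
Your proof is correct and follows essentially the same route as the paper: both first invoke the $\lambda$-circulance of $BA^T$ (using $\lambda^{-1}=\lambda$) to reduce to the first row, then identify $(BA^T)_{1,j+1}$ as the inner product of the first row of $B$ with the $(j+1)\nth$ row of $A$ and track which terms acquire the wrap-around factor $\lambda$. Your version merely makes explicit, via the substitution $i=k-j$ (resp.\ $i=k-j+n$), the index bookkeeping that the paper carries out verbally in terms of circular shifts.
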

	
	\begin{proof}
	Since $\lambda^2=1$ by assumption, we know that $BA^T$ is $\lambda$-circulant such that $BA^T=\pcir{\lambda}{\vct{x}_1\vct{y}_1^T,\vct{x}_1\vct{y}_2^T,\ldots,\vct{x}_1\vct{y}_n^T}$, where $\vct{x}_i$ and $\vct{y}_i$ denote the $i\nth$ rows of $B$ and $A$, respectively. Let $BA^T=\pcir{\lambda}{v_0,v_1,\ldots,v_{n-1}}$ so that $v_j=\vct{x}_1\vct{y}_{j+1}^T$ for $j\in[0\zint n-1]$. It is easy to see that $v_0=\sum_{i=0}^{n-1}b_ia_i$. In the product $v_1=\vct{x}_1\vct{y}_2^T$, we see that the indices of the vector $\vct{y}_2=(\lambda a_{n-1},a_0,a_1,\ldots,a_{n-2})$ correspond to the indices of the vector $\vct{x}_1=(b_0,b_1,b_2,\ldots,b_{n-1})$ after being circularly shifted to the right by 1 place. Thus, in $v_1=\vct{x}_1\vct{y}_2^T$, there is a summation of terms in the form $b_{[i+1]_n}a_i$ for $i\in[0\zint n-1]$. By extending this argument, we see that in the product $\vct{x}_1\vct{y}_{j+1}^T$, there is a summation of terms in the form $b_{[i+j]_n}a_i$ for $i\in[0\zint n-1]$ and $j\in[1\zint n-1]$. Also, in $v_1$, we see that the terms of the summation will acquire $\lambda$ as a coefficient for $i=n-1$. By extending this argument, in $v_j$, we see that the terms of the summation will acquire $\lambda$ as a coefficient for $i\in[n-j\zint n-1]$ and $j\in[1\zint n-1]$. In summary, we have
		\begin{equation*}
		v_j=\begin{cases}
		\sum_{i=0}^{n-1}b_ia_i,& j=0,\\
		\sum_{i=0}^{n-j-1}b_{[i+j]_n}a_i+\lambda\sum_{i=n-j}^{n-1}b_{[i+j]_n}a_i,& j\in[1\zint n-1]
		\end{cases}
		\end{equation*}	
	and so, in terms of $\Theta$, we see that $v_j=\Theta(\vct{b},\vct{a},j)[\lambda]$, $\forall j\in[0\zint n-1]$.
	\end{proof}

With these lemmas established, we will now look at how $\Theta$ can be used to prove that a matrix $G=(I_n\,|\,A)$ is a generator matrix of a self-dual code.
	\begin{proposition}\label{prop-3}\textup{(\cite{2})}
	Let $R$ be a commutative ring. Let $A=\pcir{\lambda}{\vct{a}}$ with $\vct{a}\in R^n$ and $\lambda\in R:\lambda^2=1$. Then $AA^T=-I_n$ if and only if
		\begin{equation*}
		\Theta(\vct{a},\vct{a},j)[\lambda]=\begin{cases}
		-1,& j=0,\\
		0,& j\in[1\zint\floor{n/2}].
		\end{cases}
		\end{equation*}	
	\end{proposition}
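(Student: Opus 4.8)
The plan is to apply Lemma \ref{lemm-2} with $B=A$, which immediately gives $AA^T=\pcir{\lambda}{v_0,v_1,\ldots,v_{n-1}}$ where $v_j=\Theta(\vct{a},\vct{a},j)[\lambda]$ for all $j\in[0\zint n-1]$. Since $-I_n$ is itself the $\lambda$-circulant matrix $\pcir{\lambda}{-1,0,0,\ldots,0}$ (its generating vector being $-\vct{e}_0$), and since a $\lambda$-circulant matrix is determined entirely by its generating vector, the equality $AA^T=-I_n$ holds if and only if $v_0=-1$ and $v_j=0$ for all $j\in[1\zint n-1]$; that is, $\Theta(\vct{a},\vct{a},0)=-1$ and $\Theta(\vct{a},\vct{a},j)[\lambda]=0$ for all $j\in[1\zint n-1]$.

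It then remains only to reduce the range $[1\zint n-1]$ down to $[1\zint\floor{n/2}]$. This is exactly what Remark \ref{remark-1} provides: setting $f(j)=\Theta(\vct{a},\vct{a},j)[\lambda]$, the remark establishes $f(j)=\lambda f(n-j)$ for all $j$, so that $f(j)=0$ for some $j\in[1\zint n-1]$ forces $f(n-j)=\lambda f(j)=0$ as well (using that $\lambda$ is a well-defined ring element, indeed a unit since $\lambda^2=1$). Hence the vanishing of $f$ on $[1\zint\floor{n/2}]$ is equivalent to its vanishing on all of $[1\zint n-1]$: every index in $[\floor{n/2}+1\zint n-1]$ is of the form $n-j$ with $j\in[1\zint\floor{n/2}]$. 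Combining this reduction with the characterisation from the previous paragraph yields the stated equivalence.

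There is essentially no hard step here — the proposition is a direct corollary of Lemma \ref{lemm-2} and Remark \ref{remark-1}, both already in hand. The only point requiring a small amount of care is the claim that a $\lambda$-circulant matrix is uniquely determined by its generating vector, so that matching $AA^T$ with $-I_n$ reduces to matching generating vectors entrywise; this is clear from the explicit displayed form of $\pcir{\lambda}{\vct{a}}$ in Section 2.3, since the first row of that matrix is precisely $\vct{a}$. I would state this observation explicitly and then simply chain the two cited results together. One should also double-check the boundary case where $n$ is even and $j=n/2$: here $n-j=j$, so the relation $f(j)=\lambda f(n-j)$ reads $f(n/2)=\lambda f(n/2)$, which is consistent and imposes no extra constraint beyond $f(n/2)=0$ already being included in the range $[1\zint\floor{n/2}]$; thus no index is lost or double-counted in the reduction.
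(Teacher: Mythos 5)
Your proposal is correct and follows essentially the same route as the paper: apply Lemma \ref{lemm-2} with $B=A$ to identify the generating vector of $AA^T$, match it against that of $-I_n$, and invoke Remark \ref{remark-1} to cut the index range from $[1\zint n-1]$ to $[1\zint\floor{n/2}]$. The extra care you take over the uniqueness of the generating vector and the $j=n/2$ boundary case is sound but not needed beyond what the paper already records.
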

	
	\begin{proof}
	Since $\lambda^2=1$ by assumption, by Lemma \ref{lemm-2} we have $AA^T=\pcir{\lambda}{v_0,v_1,\ldots,v_{n-1}}$ where $v_j=\Theta(\vct{a},\vct{a},j)[\lambda]$, $\forall j\in[0\zint n-1]$. The main diagonal and off-diagonal entries of $AA^T$ are given by $v_0$ and $v_j$, respectively, for $j\neq 0$, so $AA^T=-I_n$ if and only if $v_0=-1$ and $v_j=0$, $\forall j\in[1\zint n-1]$. By Remark \ref{remark-1}, we see that it is sufficient to verify $v_0=-1$ and $v_j=0$, $\forall j\in[1\zint\floor{n/2}]$. Therefore, we see that $AA^T=-I_n$ if and only if
		\begin{equation*}
		\Theta(\vct{a},\vct{a},j)[\lambda]=\begin{cases}
		-1,& j=0,\\
		0,& j\in[1\zint\floor{n/2}].
		\end{cases}
		\end{equation*}	
	\end{proof}

For example, consider the pure double circulant construction of self-dual codes given by $G=(I_n\,|\,A)$ for an $n\times n$ $\lambda$-circulant matrix $A=\pcir{\lambda}{\vct{a}}$ over a commutative Frobenius ring $R$ such that $\lambda\in R:\lambda^2=1$. We know that $G$ is a generator matrix of a self-dual $[2n,n]$ code over $R$ if and only if $AA^T=-I_n$ and in terms of $\Theta$, by Proposition \ref{prop-3} this is true if and only if
	\begin{equation*}
	\Theta(\vct{a},\vct{a},j)[\lambda]=\begin{cases}
	-1,& j=0,\\
	0,& j\in[1\zint\floor{n/2}].
	\end{cases}
	\end{equation*}		
	
If $A$ is the $\lambda$-circulant matrix generated by $\vct{a}\in R^n$, then Proposition \ref{prop-3} tells us that in order to verify the equality $AA^T=-I_n$, we need only compute the values of $\floor{n/2}+1$ quantities in terms of $\vct{a}$ instead of having to construct $A$ from its generating vector as well as computing the product $AA^T$. We implement this result in our algorithms in order to reduce the number of computations required and hence improve efficiency.
\section{The Construction}

In this section, we present the new technique for constructing self-dual codes. We will hereafter always assume $R$ is a commutative Frobenius ring.
	\begin{theorem}\label{thm-1}
	Let
		\begin{equation*}
		G=\begin{pmatrix}[c|c]
		I_{2n} & X
		\end{pmatrix},\quad\text{where }
		X=\begin{pmatrix}
		-A^TCJ & -B\\
		B^TCJ & -A
		\end{pmatrix}
		\end{equation*}	
	and where $J=J_n$, $A=\pcir{\lambda}{\vct{a}}$, $B=\pcir{\lambda}{\vct{b}}$ and $C=\pcir{\mu}{\vct{c}}$ with $\vct{a},\vct{b},\vct{c}\in R^n$ and $\lambda,\mu\in R:\lambda^2=\mu^2=1$. Then $G$ is a generator matrix of a self-dual $[4n,2n]$ code over $R$ if and only if
		\begin{empheq}[left=\empheqlbrace]{align*}
		\sum_{\vct{x}\in S}\Theta(\vct{x},\vct{x},j)[\lambda]&=\begin{cases}
		-1,& j=0,\\
		0,& j\in[1\zint\floor{n/2}],
		\end{cases}\\
		\Theta(\vct{c},\vct{c},j)[\mu]&=\begin{cases}
		1,& j=0,\\
		0,& j\in[1\zint\floor{n/2}],
		\end{cases}
		\end{empheq}		
	where $S=\{\vct{a},\vct{b}\}$.	
	\end{theorem}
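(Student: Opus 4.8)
The plan is to use the standard criterion that $G = (I_{2n} \mid X)$ generates a self-dual code over a commutative Frobenius ring if and only if $XX^T = -I_{2n}$, then to expand this matrix equation block by block and translate each resulting scalar identity into the language of $\Theta$ via the lemmas already established. First I would record that $G$ has $2n$ rows which are visibly linearly independent (the left block is $I_{2n}$), so $G$ generates a code of length $4n$ and rank $2n$; this code is self-dual precisely when $GG^T = 0$, i.e.\ $I_{2n} + XX^T = 0$. So the entire content is the equivalence $XX^T = -I_{2n} \iff$ the displayed $\Theta$-conditions.

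Next I would compute $XX^T$ in $2\times 2$ block form. Writing $X = \begin{pmatrix} -A^TCJ & -B \\ B^TCJ & -A \end{pmatrix}$, the four blocks of $XX^T$ are: the $(1,1)$ block $A^TCJ J^T C^T A + BB^T = A^TCJJC^TA + BB^T$; the $(2,2)$ block $B^TCJJC^TB + AA^T$; and the $(1,2)$ and $(2,1)$ blocks, which are $A^TCJJC^TB - BA^T$ and $B^TCJJC^TA - AB^T$ (transposes of each other). Here I would use $J^T = J$ and, crucially, that $J J = I_n$, so every occurrence of $CJJC^T$ collapses to $CC^T$. Thus the $(1,1)$ block is $A^T C C^T A + BB^T$, the $(2,2)$ block is $B^T C C^T B + AA^T$, and the off-diagonal block is $A^T C C^T B - BA^T$. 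Now the hypothesis $\mu^2 = 1$ makes $CC^T$ a $\mu$-circulant matrix, and by Proposition~\ref{prop-3} the second displayed condition $\Theta(\vct{c},\vct{c},j)[\mu] = \delta_{j,0}$ (for $j \in [0\zint\floor{n/2}]$, hence for all $j$ by Remark~\ref{remark-1}) is equivalent to $CC^T = I_n$. So I would first show the second $\Theta$-condition forces $CC^T = I_n$, under which the $(1,1)$ block becomes $A^TA + BB^T$, the $(2,2)$ block becomes $B^TB + AA^T$, and the off-diagonal block becomes $A^TB - BA^T = 0$ automatically since $\lambda$-circulant matrices commute and $A^TB = BA^T$ once we note $A^T$ is $\lambda^{-1}$-circulant $= \lambda$-circulant (as $\lambda^2=1$) and $\lambda$-circulants commute; more carefully, $A^TB$ and $BA^T$ are both the transpose-free product and equal because the ring is commutative and the $P_\lambda$-matrices commute. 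Therefore, assuming $CC^T = I_n$, self-duality reduces to $AA^T + B^TB = -I_n$ and $A^TA + BB^T = -I_n$; since $A^TA$ and $AA^T$ are both $\lambda$-circulant with the same generating data up to the symmetry in Lemma~\ref{lemm-1}, and likewise $B^TB, BB^T$, these two conditions are equivalent to the single condition $AA^T + BB^T = -I_n$.

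Then I would finish by applying Proposition~\ref{prop-3}-style reasoning to the sum $AA^T + BB^T$. By Lemma~\ref{lemm-2}, $AA^T = \pcir{\lambda}{u_0,\dots,u_{n-1}}$ with $u_j = \Theta(\vct{a},\vct{a},j)[\lambda]$ and $BB^T = \pcir{\lambda}{w_0,\dots,w_{n-1}}$ with $w_j = \Theta(\vct{b},\vct{b},j)[\lambda]$, so $AA^T + BB^T = \pcir{\lambda}{u_0+w_0,\dots}$ with $j$th generator $\Theta(\vct{a},\vct{a},j)[\lambda] + \Theta(\vct{b},\vct{b},j)[\lambda] = \sum_{\vct{x}\in S}\Theta(\vct{x},\vct{x},j)[\lambda]$. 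This $\lambda$-circulant matrix equals $-I_n$ iff its diagonal generator is $-1$ and all off-diagonal generators vanish, and by Remark~\ref{remark-1} (applied to $f(j) = \sum_{\vct x\in S}\Theta(\vct x,\vct x,j)[\lambda]$, which satisfies $f(j) = \lambda f(n-j)$ by additivity of Lemma~\ref{lemm-1}) it suffices to check $j \in [0\zint\floor{n/2}]$. This yields exactly the first displayed condition, completing the equivalence.

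The main obstacle I anticipate is the careful bookkeeping showing that the two diagonal conditions $AA^T + BB^T = -I_n$ and $A^TA + B^TB = -I_n$ are genuinely equivalent (rather than just simultaneously implied), and that the off-diagonal block vanishes identically once $CC^T = I_n$; both rest on the commuting/transpose behaviour of $\lambda$-circulant matrices when $\lambda^2 = 1$, which is stated in the Special Matrices subsection but must be invoked precisely (in particular that $A^T$ being $\lambda^{-1}$-circulant coincides with being $\lambda$-circulant here, so that $A^T$, $B$, $C$, $CC^T$ all lie in the commutative algebra generated by $P_\lambda$ resp.\ $P_\mu$). A secondary point requiring care is the "only if" direction: one must argue that $CC^T = I_n$ is forced — this follows because $XX^T = -I_{2n}$ gives the off-diagonal block $A^TCC^TB = BA^T$ and the diagonal blocks $A^TCC^TA + BB^T = -I_n$, etc., and by taking suitable combinations (or by specializing, e.g.\ the structure forces $CC^T$ to commute with everything and then a rank/trace argument, or more simply by noting the block equations already isolate $CC^T$ after substitution) one recovers $CC^T = I_n$; I would handle this by showing the displayed $\Theta$-system is equivalent to the conjunction "$CC^T = I_n$ and $AA^T + BB^T = -I_n$" and then that this conjunction is equivalent to $XX^T = -I_{2n}$.
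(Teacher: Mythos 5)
Your proposal is correct and follows essentially the same route as the paper: compute $XX^T$ blockwise using $J^T=J$ and $J^2=I_n$, reduce the self-duality condition to the conjunction $CC^T=I_n$ and $AA^T+BB^T=-I_n$ (using that $A^T$, $B^T$ are again $\lambda$-circulant since $\lambda^2=1$, so everything in sight commutes and $A^TA=AA^T$, $A^TB=BA^T$), and then translate via Lemma \ref{lemm-2}, Remark \ref{remark-1} and Proposition \ref{prop-3} into the displayed $\Theta$-conditions. The only point where you are vaguer than the paper is the ``only if'' step forcing $CC^T=I_n$: the paper makes this concrete by right-multiplying the $(1,1)$ block equation by $A^T$ and substituting the off-diagonal relation $A^TCC^TB=BA^T$ to obtain $CC^T(AA^T+BB^T)=-I_n$, from which (together with the two diagonal equations) it concludes $CC^T=I_n$, whereas you only gesture at ``suitable combinations''.
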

	
	\begin{proof}
	We know that $G$ is a generator matrix of a self-dual $[4n,2n]$ code over $R$ if and only if $XX^T=-I_{2n}$. Since $\lambda^2=1$ by assumption, we have that $A$ and $B$ as well as their transpositions all commute with one another multiplicatively. Firstly, since $J$ is symmetric we have
		\begin{equation*}
		X^T=\begin{pmatrix}
		-A^TCJ & -B\\
		B^TCJ & -A
		\end{pmatrix}^T=
		\begin{pmatrix}
		-JC^TA & JC^TB\\
		-B^T & -A^T
		\end{pmatrix}.
		\end{equation*}	
		
	If the $(i,j)\nth$ block-wise entry of $XX^T$ is $x_{i,j}$, noting that $J$ is involutory we see that
		\begin{align*}
		x_{1,1}&=A^TCJ^2C^TA+BB^T=A^TCC^TA+BB^T,\\
		x_{1,2}&=-A^TCJ^2C^TB+BA^T=-A^TCC^TB+BA^T,\\
		x_{2,1}&=-B^TCJ^2C^TA+AB^T=-B^TCC^TA+AB^T,\\
		x_{2,2}&=B^TCJ^2C^TB+AA^T=B^TCC^TB+AA^T.
		\end{align*}	
		
	Noting that $XX^T=-I_{2n}$ if and only if $A^TCC^TB=BA^T$, we see that
		\begin{align*}
		XX^T=-I_{2n}&\iff A^TCC^TA+BB^T=-I_n\\&\iff A^TCC^TAA^T+(BA^T)B^T=-A^T\\&\iff A^TCC^TAA^T+(A^TCC^TB)B^T=-A^T\\&\iff A^TCC^T(AA^T+BB^T)=-A^T\\&\iff
		CC^T(AA^T+BB^T)=-I_n
		\end{align*}
	and so, combined with our other required conditions, we have that $XX^T=-I_{2n}$ if and only if
		\begin{empheq}[left=\empheqlbrace]{align*}
		A^TCC^TA+BB^T&=-I_n,\\
		B^TCC^TB+AA^T&=-I_n,\\
		CC^T(AA^T+BB^T)&=-I_n.
		\end{empheq}	
		
	Clearly, we must have $CC^T=I_n$ for all of these equations to be satisfied. With this prerequisite, our conditions reduce to
		\begin{align*}
		x_{1,1}&=AA^T+BB^T,\\
		x_{1,2}&=\vct{0},\\
		x_{2,1}&=\vct{0},\\
		x_{2,2}&=AA^T+BB^T.
		\end{align*}
		
	Therefore, $XX^T=-I_{2n}$ if and only if $AA^T+BB^T=-I_n$ and $CC^T=I_n$ and by Proposition \ref{prop-3} this is true if and only if
		\begin{empheq}[left=\empheqlbrace]{align*}
		\sum_{\vct{x}\in S}\Theta(\vct{x},\vct{x},j)[\lambda]&=\begin{cases}
		-1,& j=0,\\
		0,& j\in[1\zint\floor{n/2}],
		\end{cases}\\
		\Theta(\vct{c},\vct{c},j)[\mu]&=\begin{cases}
		1,& j=0,\\
		0,& j\in[1\zint\floor{n/2}],
		\end{cases}
		\end{empheq}		
	where $S=\{\vct{a},\vct{b}\}$.	
	\end{proof}

	\begin{remark}
		Let $U'$ denote the set of involutory units in $R$ and let $N_{C}=N_C(R,n)$ denote the number of orthogonal $\mu$-circulant matrices $C$ (i.e. $CC^T=I_n$) over $R$ for all $\mu\in U'$. The search field for self-dual $[4n,2n]$ codes over $R$ constructed by Theorem \ref{thm-1} is of size $|R|^{2n}\cdot |U'|\cdot N_C$. In general, $N_C$ is relatively small, for example $N_C(\FF_2,20)=2,560$ and $N_C(\FF_2+u\FF_2+v\FF_2+uv\FF_2,5)=20,480$.
	\end{remark}
	\begin{remark}\label{remark-2}
	In Theorem \ref{thm-1}, we are in fact able to assume $C$ is any matrix over $R$ such that $C$ is orthogonal. Moreover, $C$ and $C^T$ need not commute multiplicatively with either $A$ or $B$ or their transpositions.
	\end{remark}
	
\section{Results}

In this section, we apply Theorem \ref{thm-1} to obtain the following types of new codes
	\begin{enumerate}[label=$\bullet$]
	\item best known singly-even binary self-dual codes of lengths 56 and 80;
	\item an extremal singly-even binary self-dual code of length 64;
	\item extremal binary self-dual codes of length 92.
	\end{enumerate}	

We also apply the following well-known technique for constructing self-dual codes referred to as the building-up construction.
	\begin{theorem}\label{thm-2}\textup{(\cite{33})}
	Let $R$ be a commutative Frobenius ring. Let $G'$ be a generator matrix of a self-dual $[2n,n]$ code $\mathcal{C}'$ over $R$ and let $\vct{r}_i$ denote the $i\nth$ row of $G'$. Let $\varepsilon\in R:\varepsilon^2=-1$, $\vctg{\updelta}\in R^{2n}:\langle\vctg{\updelta},\vctg{\updelta}\rangle=-1$ and $\gamma_i=\langle\vct{r}_i,\vctg{\updelta}\rangle$ for $i\in[1\zint n]$. Then the matrix
		\begin{equation*}
		G=\begin{pmatrix}[cc|c]
		1 & 0 & \vctg{\updelta}\\\hline
		-\gamma_1 & \varepsilon\gamma_1 & \vct{r}_1\\
		-\gamma_2 & \varepsilon\gamma_2 & \vct{r}_2\\
		\vdots & \vdots & \vdots\\
		-\gamma_n & \varepsilon\gamma_n & \vct{r}_n
		\end{pmatrix}
		\end{equation*}
	is a generator matrix of a self-dual $[2(n+1),n+1]$ code over $R$.
	\end{theorem}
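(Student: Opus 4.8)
The plan is to show that the row space $\mathcal{C}$ of $G$ is a self-orthogonal code of length $2(n+1)$ which contains exactly $|R|^{n+1}$ codewords; since over a finite commutative Frobenius ring a code of length $N$ satisfies $|\mathcal{C}|\cdot|\mathcal{C}^{\bot}|=|R|^{N}$, this forces $\mathcal{C}=\mathcal{C}^{\bot}$, i.e.\ $G$ generates a self-dual $[2(n+1),n+1]$ code.

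First I would verify self-orthogonality by computing the pairwise Euclidean inner products of the rows of $G$. Writing $\vct{R}_0=(1,0,\vctg{\updelta})$ and $\vct{R}_i=(-\gamma_i,\varepsilon\gamma_i,\vct{r}_i)$ for $i\in[1\zint n]$, one gets $\langle\vct{R}_0,\vct{R}_0\rangle=1+\langle\vctg{\updelta},\vctg{\updelta}\rangle=0$; $\langle\vct{R}_0,\vct{R}_i\rangle=-\gamma_i+\langle\vct{r}_i,\vctg{\updelta}\rangle=-\gamma_i+\gamma_i=0$; and $\langle\vct{R}_i,\vct{R}_j\rangle=\gamma_i\gamma_j+\varepsilon^2\gamma_i\gamma_j+\langle\vct{r}_i,\vct{r}_j\rangle=(1+\varepsilon^2)\gamma_i\gamma_j=0$, using in turn $\langle\vctg{\updelta},\vctg{\updelta}\rangle=-1$, the definition $\gamma_i=\langle\vct{r}_i,\vctg{\updelta}\rangle$ together with symmetry of $\langle,\rangle$, then $\varepsilon^2=-1$ and the self-orthogonality $\langle\vct{r}_i,\vct{r}_j\rangle=0$ of $\mathcal{C}'$ (the case $i=j$ being included). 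Hence $\mathcal{C}\subseteq\mathcal{C}^{\bot}$.

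Next I would pin down $|\mathcal{C}|$ by showing the $n+1$ rows of $G$ are $R$-linearly independent. The key preliminary observation is that the rows $\vct{r}_1,\dots,\vct{r}_n$ of $G'$ are already linearly independent over $R$: the map $R^n\to\mathcal{C}'$, $\vctg{\updelta}'\mapsto\vctg{\updelta}'G'$, is a surjective homomorphism of finite abelian groups and $|\mathcal{C}'|=|R|^n=|R^n|$ since $\mathcal{C}'$ is self-dual of length $2n$, so its kernel is trivial. Now suppose $\lambda_0\vct{R}_0+\sum_{i=1}^n\lambda_i\vct{R}_i=\vct{0}$. The second coordinate gives $\varepsilon\sum_{i=1}^n\lambda_i\gamma_i=0$, and since $\varepsilon^2=-1$ makes $\varepsilon$ a unit we get $\sum_{i=1}^n\lambda_i\gamma_i=0$; the first coordinate then gives $\lambda_0=0$, and the remaining $2n$ coordinates read $\sum_{i=1}^n\lambda_i\vct{r}_i=\vct{0}$, forcing $\lambda_1=\dots=\lambda_n=0$. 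Thus $\mathcal{C}$ is the image of an injective map $R^{n+1}\to R^{2(n+1)}$, so $|\mathcal{C}|=|R|^{n+1}$.

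Finally, combining these with $|\mathcal{C}|\cdot|\mathcal{C}^{\bot}|=|R|^{2(n+1)}$ yields $|\mathcal{C}^{\bot}|=|R|^{n+1}=|\mathcal{C}|$; since $\mathcal{C}\subseteq\mathcal{C}^{\bot}$ and both are finite of equal size, $\mathcal{C}=\mathcal{C}^{\bot}$, so $G$ generates a self-dual $[2(n+1),n+1]$ code. I expect the only genuine subtlety to be the cardinality step — that is, justifying the linear independence of the rows of $G'$ (and hence of $G$) so that the count $|\mathcal{C}|=|R|^{n+1}$ is valid; the inner-product computations and the concluding duality argument are routine. A cosmetic alternative for the last step is to invoke the standard fact that a self-orthogonal code over a finite commutative Frobenius ring whose cardinality equals $|R|$ raised to half the length is automatically self-dual.
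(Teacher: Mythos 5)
The paper does not prove this theorem: it is imported from \cite{33} as a known result, so there is no internal proof to compare against. Your argument is correct and complete — the inner-product computations give $\mathcal{C}\subseteq\mathcal{C}^{\bot}$, and the counting step $|\mathcal{C}|=|R|^{n+1}$ together with $|\mathcal{C}|\cdot|\mathcal{C}^{\bot}|=|R|^{2(n+1)}$ (which is precisely where the finite Frobenius hypothesis is used, both here and in deducing $|\mathcal{C}'|=|R|^n$) forces equality. This is essentially the standard proof of the building-up construction found in the cited literature, and your identification of the linear-independence/cardinality step as the only non-routine point is accurate.
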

	
By utilising Theorem \ref{thm-2}, we obtain the following types of new codes
	\begin{enumerate}[label=$\bullet$]
	\item best known singly-even binary self-dual codes of length 56;
	\item an optimal binary self-dual code of length 58.
	\end{enumerate}	

We conduct the search for these codes using MATLAB and determine their properties using Q-extension \cite{34}. In MATLAB, we employ an algorithm which randomly searches for the construction parameters that satisfy the necessary and sufficient conditions stated in Theorem \ref{thm-1}. For such parameters, we then build the corresponding binary generator matrices and print them to text files. We then use Q-extension to read these text files and determine the minimum distance, partial weight enumerator and automorphism group order of each corresponding code. We implement a similar procedure for Theorem \ref{thm-2}. We do not give any additional information regarding the automorphism groups. A database of generator matrices of the new codes is given online at \cite{gmd}. The database is partitioned into text files (interpretable by Q-extension) corresponding to each code type. In these files, specific properties of the codes including the construction parameters, weight enumerator parameter values and automorphism group order are formatted as comments above the generator matrices. Partial weight enumerators of the codes are also formatted as comments below the generator matrices. Table \ref{table-01} gives the hexadecimal notation system we use to represent elements of $\FF_2+u\FF_2$, $\FF_2+u\FF_2+v\FF_2+uv\FF_2$, $\FF_4$ and $\FF_4+u\FF_4$.

	\begin{table}[h!]
	\caption{Hexadecimal notation system for elements of $\FF_2+u\FF_2$, $\FF_2+u\FF_2+v\FF_2+uv\FF_2$, $\FF_4$ and $\FF_4+u\FF_4$.}\label{table-01}
	\centering
	\begin{adjustbox}{max width=\textwidth}
	\footnotesize
	\begin{tabular}{ccccc}\midrule
	$\FF_2+u\FF_2$ & $\FF_2+u\FF_2+v\FF_2+uv\FF_2$ & $\FF_4$ & $\FF_4+u\FF_4$ & Symbol\\\midrule
	$0$ & $0$ & $0$ & $0$ & \texttt{0}\\
	$1$ & $1$ & $1$ & $1$ & \texttt{1}\\
	$u$ & $u$ & $w$ & $w$ & \texttt{2}\\
	$1+u$ & $1+u$ & $1+w$ & $1+w$ & \texttt{3}\\
	$-$ & $v$ & $-$ & $u$ & \texttt{4}\\
	$-$ & $1+v$ & $-$ & $1+u$ & \texttt{5}\\
	$-$ & $u+v$ & $-$ & $w+u$ & \texttt{6}\\
	$-$ & $1+u+v$ & $-$ & $1+w+u$ & \texttt{7}\\
	$-$ & $uv$ & $-$ & $wu$ & \texttt{8}\\
	$-$ & $1+uv$ & $-$ & $1+wu$ & \texttt{9}\\
	$-$ & $u+uv$ & $-$ & $w+wu$ & \texttt{A}\\
	$-$ & $1+u+uv$ & $-$ & $1+w+wu$ & \texttt{B}\\
	$-$ & $v+uv$ & $-$ & $u+wu$ & \texttt{C}\\
	$-$ & $1+v+uv$ & $-$ & $1+u+wu$ & \texttt{D}\\
	$-$ & $u+v+uv$ & $-$ & $w+u+wu$ & \texttt{E}\\
	$-$ & $1+u+v+uv$ & $-$ & $1+w+u+wu$ & \texttt{F}\\\midrule
	\end{tabular}
	\end{adjustbox}
	\end{table}
	
\subsection{New Singly-Even Binary Self-Dual \BM{[56,28,10]} Codes}

The possible weight enumerators of a singly-even binary self-dual $[56,28,10]$ code are given in \cite{10} as
	\begin{align*}
	W_{56,1}&=1+(308+4\alpha)x^{10}+(4246-8\alpha)x^{12}+\cdots,\\
	W_{56,2}&=1+(308+4\alpha)x^{10}+(3990-8\alpha)x^{12}+\cdots,
	\end{align*}
where $\alpha\in\ZZ$. The existence of codes with weight enumerator $W_{56,1}$ has previously been determined for
	\begin{enumerate}[label=]
	\item $\alpha\in\{-z:z=12,\lb14,\lb16,\lb18,\lb19,\lb21,\lb...,\lb34,\lb38,\lb40,\lb51,\lb64\}$
	\end{enumerate}	
(see \cite{10,11,12,2}) and the existence of codes with weight enumerator $W_{56,2}$ has previously been determined for
	\begin{enumerate}[label=]
	\item $\alpha\in\{-z:z=0,\lb2,\lb4,\lb...,\lb32,\lb34,\lb35,\lb38,\lb40,\lb42,\lb56\}$
	\end{enumerate}
(see \cite{10,11,12,2}).

We obtain 29 new best known singly-even binary self-dual codes of length 56 of which 15 have weight enumerator $W_{56,1}$ for
	\begin{enumerate}[label=]
	\item $\alpha\in\{-z:z=13,\lb15,\lb17,\lb20,\lb35,\lb36,\lb37,\lb39,\lb41,\lb...,\lb44,\lb46,\lb48,\lb52\}$
	\end{enumerate}	
and 14 have weight enumerator $W_{56,2}$ for
	\begin{enumerate}[label=]
	\item $\alpha\in\{-z:z=1,\lb3,\lb33,\lb36,\lb37,\lb39,\lb41,\lb43,\lb...,\lb49\}$.
	\end{enumerate}	

Of the 29 new codes, one is constructed by applying Theorem \ref{thm-1} over $\FF_4$ (Table \ref{table-56-1}); 20 are constructed by first applying Theorem \ref{thm-1} to obtain codes of length 12 over $\FF_2+u\FF_2+v\FF_2+uv\FF_2$ (Table \ref{table-56-2}) to which we then apply Theorem \ref{thm-2} (Table \ref{table-56-4}) and similarly 8 are constructed by first applying Theorem \ref{thm-1} to obtain codes of length 12 over $\FF_4+u\FF_4$ (Table \ref{table-56-3}) to which we then apply Theorem \ref{thm-2} (Table \ref{table-56-4}).
	
	\begin{table}[h!]
	\caption{New singly-even binary self-dual $[56,28,10]$ code from Theorem \ref{thm-1} over $\FF_4$.}\label{table-56-1}
	\centering
	\begin{adjustbox}{max width=\textwidth}
	\footnotesize
	\begin{tabular}{ccccccccc}\midrule
	$\mathcal{C}_{56,i}$ & $\vct{a}$ & $\vct{b}$ & $\vct{c}$ & $W_{56,j}$ & $\alpha$ & $|\aut{\mathcal{C}_{56,i}}|$\\\midrule
	1 & \texttt{(1110320)} & \texttt{(3002312)} & \texttt{(3231112)} & 2 & $-49$ & $2\cdot 7$\\\midrule
	\end{tabular}
	\end{adjustbox}
	\end{table}
	
	\begin{table}[h!]
	\caption{Codes of length 12 over $\FF_2+u\FF_2+v\FF_2+uv\FF_2$ from Theorem \ref{thm-1} to which we apply Theorem \ref{thm-2} to obtain new singly-even binary self-dual $[56,28,10]$ codes.}\label{table-56-2}
	\centering
	\begin{adjustbox}{max width=\textwidth}
	\footnotesize
	\begin{tabular}{cccccc}\midrule
	$\mathcal{C}_{12,i}'$ & $\lambda$ & $\mu$ & $\vct{a}$ & $\vct{b}$ & $\vct{c}$\\\midrule
	1 & \texttt{F} & \texttt{9} & \texttt{(957)} & \texttt{(D85)} & \texttt{(EFE)}\\
	2 & \texttt{3} & \texttt{D} & \texttt{(831)} & \texttt{(71B)} & \texttt{(E1E)}\\
	3 & \texttt{1} & \texttt{7} & \texttt{(07F)} & \texttt{(371)} & \texttt{(66D)}\\
	4 & \texttt{B} & \texttt{B} & \texttt{(6B1)} & \texttt{(5DF)} & \texttt{(454)}\\
	5 & \texttt{D} & \texttt{B} & \texttt{(AF3)} & \texttt{(D3B)} & \texttt{(C43)}\\
	6 & \texttt{3} & \texttt{F} & \texttt{(33D)} & \texttt{(DF0)} & \texttt{(A2B)}\\
	7 & \texttt{1} & \texttt{3} & \texttt{(07F)} & \texttt{(F5B)} & \texttt{(C7C)}\\
	8 & \texttt{B} & \texttt{3} & \texttt{(FD5)} & \texttt{(075)} & \texttt{(E7E)}\\
	9 & \texttt{F} & \texttt{9} & \texttt{(99B)} & \texttt{(71C)} & \texttt{(C3C)}\\
	10 & \texttt{B} & \texttt{9} & \texttt{(359)} & \texttt{(25F)} & \texttt{(E3E)}\\
	11 & \texttt{1} & \texttt{B} & \texttt{(B71)} & \texttt{(F78)} & \texttt{(E6B)}\\
	12 & \texttt{5} & \texttt{1} & \texttt{(BBB)} & \texttt{(9A9)} & \texttt{(2D2)}\\\midrule
	\end{tabular}
	\end{adjustbox}
	\end{table}
	
	\begin{table}[h!]
	\caption{Codes of length 12 over $\FF_4+u\FF_4$ from Theorem \ref{thm-1} to which we apply Theorem \ref{thm-2} to obtain new singly-even binary self-dual $[56,28,10]$ codes.}\label{table-56-3}
	\centering
	\begin{adjustbox}{max width=\textwidth}
	\footnotesize
	\begin{tabular}{cccccc}\midrule
	$\mathcal{C}_{12,i}'$ & $\lambda$ & $\mu$ & $\vct{a}$ & $\vct{b}$ & $\vct{c}$\\\midrule
	13 & \texttt{9} & \texttt{D} & \texttt{(2A9)} & \texttt{(4AE)} & \texttt{(544)}\\
	14 & \texttt{D} & \texttt{D} & \texttt{(526)} & \texttt{(282)} & \texttt{(100)}\\
	15 & \texttt{9} & \texttt{5} & \texttt{(1A0)} & \texttt{(F2B)} & \texttt{(100)}\\
	16 & \texttt{9} & \texttt{5} & \texttt{(977)} & \texttt{(34F)} & \texttt{(100)}\\
	17 & \texttt{1} & \texttt{9} & \texttt{(736)} & \texttt{(CE1)} & \texttt{(858)}\\
	18 & \texttt{9} & \texttt{D} & \texttt{(1B5)} & \texttt{(529)} & \texttt{(001)}\\\midrule
	\end{tabular}
	\end{adjustbox}
	\end{table}
	
	\begin{table}[h!]
	\caption{New singly-even binary self-dual $[56,28,10]$ codes from applying Theorem \ref{thm-2} to $\mathcal{C}_{12,j}'$ as given in Tables \ref{table-56-2} and \ref{table-56-3}.}\label{table-56-4}
	\centering
	\begin{adjustbox}{max width=\textwidth}
	\footnotesize
	\begin{tabular}{ccccccc}\midrule
	$\mathcal{C}_{56,i}$ & $\mathcal{C}'_{12,j}$ & $\varepsilon$ & $\vctg{\updelta}$ & $W_{56,k}$ & $\alpha$ & $|\aut{\mathcal{C}_{56,i}}|$\\\midrule
	2 & 1 & \texttt{D} & \texttt{(EBEB4DA6D9A6)} & 1 & $-52$ & $2^2$\\
	3 & 1 & \texttt{D} & \texttt{(5D1373E2FDE5)} & 1 & $-48$ & $2^2$\\
	4 & 2 & \texttt{7} & \texttt{(C8A98C86755D)} & 1 & $-46$ & $2^2$\\
	5 & 3 & \texttt{D} & \texttt{(53DFED023D37)} & 1 & $-44$ & $2^2$\\
	6 & 13 & \texttt{D} & \texttt{(6847C689DE95)} & 1 & $-43$ & $2$\\
	7 & 1 & \texttt{7} & \texttt{(77EF377F3D46)} & 1 & $-42$ & $2^2$\\
	8 & 13 & \texttt{9} & \texttt{(A6A8CB9C1499)} & 1 & $-41$ & $2$\\
	9 & 14 & \texttt{9} & \texttt{(1155EF4F171C)} & 1 & $-39$ & $2$\\
	10 & 15 & \texttt{9} & \texttt{(071C9E507475)} & 1 & $-37$ & $2$\\
	11 & 4 & \texttt{B} & \texttt{(222332B06AF3)} & 1 & $-36$ & $2^2$\\
	12 & 16 & \texttt{D} & \texttt{(40DD2E682BA8)} & 1 & $-35$ & $2$\\
	13 & 5 & \texttt{1} & \texttt{(8EFCE02FED71)} & 1 & $-20$ & $2^2$\\
	14 & 17 & \texttt{9} & \texttt{(2396462B0954)} & 1 & $-17$ & $2$\\
	15 & 16 & \texttt{D} & \texttt{(200E34345CD9)} & 1 & $-15$ & $2$\\
	16 & 18 & \texttt{D} & \texttt{(0625805ED91E)} & 1 & $-13$ & $2$\\
	17 & 6 & \texttt{9} & \texttt{(B7B77FF08521)} & 2 & $-48$ & $2^2$\\
	18 & 7 & \texttt{5} & \texttt{(EEC4CDD6BDE3)} & 2 & $-47$ & $2^2$\\
	19 & 8 & \texttt{D} & \texttt{(C204459424ED)} & 2 & $-46$ & $2^2$\\
	20 & 9 & \texttt{1} & \texttt{(7141B8B30F5F)} & 2 & $-45$ & $2^2$\\
	21 & 10 & \texttt{D} & \texttt{(11AFA2912E93)} & 2 & $-44$ & $2^2$\\
	22 & 6 & \texttt{1} & \texttt{(6538469B9516)} & 2 & $-43$ & $2^2$\\
	23 & 1 & \texttt{F} & \texttt{(E10A9596C3BF)} & 2 & $-41$ & $2^2$\\
	24 & 7 & \texttt{F} & \texttt{(63A97D46ACCB)} & 2 & $-39$ & $2^2$\\
	25 & 10 & \texttt{1} & \texttt{(0137F87FD723)} & 2 & $-37$ & $2^2$\\
	26 & 8 & \texttt{B} & \texttt{(CDA9F7DEB302)} & 2 & $-36$ & $2^2$\\
	27 & 11 & \texttt{3} & \texttt{(43D72C348FC4)} & 2 & $-33$ & $2^2$\\
	28 & 3 & \texttt{1} & \texttt{(01A517F0F84B)} & 2 & $-3$ & $2^2$\\
	29 & 12 & \texttt{1} & \texttt{(A169C21CFB80)} & 2 & $-1$ & $2^2$\\\midrule
	\end{tabular}
	\end{adjustbox}
	\end{table}
	
\subsection{New Binary Self-Dual \BM{[58,29,10]} Code}

The possible weight enumerators of a binary self-dual $[58,29,10]$ code are given in \cite{42,37} as
	\begin{align*}
	W_{58,1}&=1+55x^{10}+5188x^{12}+\cdots,\\
	W_{58,2}&=1+(319-2\alpha-24\beta)x^{10}+(3132+2\alpha+152\beta)x^{12}+\cdots,
	\end{align*}
where $\alpha\in[0\zint 159-12\beta]$ and $\beta\in[0\zint 11]$ for $W_{58,2}$. The existence of codes with weight enumerator $W_{58,2}$ has previously been determined for
	\begin{enumerate}[label=]
	\item $\beta=0$ and $\alpha\in\{2z:z=0,\lb...,\lb66,\lb68,\lb71,\lb79\}$;
	\item $\beta=1$ and $\alpha\in\{2z:z=8,\lb...,\lb58,\lb63\}$;
	\item $\beta=2$ and $\alpha\in\{2z:z=0,\lb4,\lb6,\lb...,\lb55\}$
	\end{enumerate}	
(see \cite{38,39}).

We obtain one new optimal binary self-dual code of length 58 which has weight enumerator $W_{58,2}$ for
	\begin{enumerate}[label=]
	\item $\beta=1$ and $\alpha=118$.
	\end{enumerate}	

To construct the new code, we first apply Theorem \ref{thm-1} to obtain a code of length 28 over $\FF_4$ (Table \ref{table-58-1}) to the image of which under $\psi_{\FF_4}$ we then apply Theorem \ref{thm-2} (Table \ref{table-58-2}).

	\begin{table}[h!]
	\caption{Code of length 28 over $\FF_4$ from Theorem \ref{thm-1} to which we apply Theorem \ref{thm-2} to obtain a new binary self-dual $[58,29,10]$ code.}\label{table-58-1}
	\centering
	\begin{adjustbox}{max width=\textwidth}
	\footnotesize
	\begin{tabular}{cccc}\midrule
	$\mathcal{C}_{28,i}'$ & $\vct{a}$ & $\vct{b}$ & $\vct{c}$\\\midrule
	1 & \texttt{(2331311)} & \texttt{(2221222)} & \texttt{(0000001)}\\\midrule
	\end{tabular}
	\end{adjustbox}
	\end{table}

	\begin{table}[h!]
	\caption{New binary self-dual $[58,29,10]$ code from applying Theorem \ref{thm-2} to $\psi_{\FF_4}(\mathcal{C}_{28,j}')$ as given in Table \ref{table-58-1}.}\label{table-58-2}
	\centering
	\begin{adjustbox}{max width=\textwidth}
	\footnotesize
	\begin{tabular}{ccccccc}\midrule
	$\mathcal{C}_{58,i}$ & $\mathcal{C}_{28,j}'$ & $\vctg{\updelta}$ & $W_{58,k}$ & $\alpha$ & $\beta$ & $|\aut{\mathcal{C}_{58,i}}|$\\\midrule
	1 & 1 & \texttt{(00100011110001010001100101111101001001111010110001010100)} & 2 & $118$ & 1 & $2$\\\midrule
	\end{tabular}
	\end{adjustbox}
	\end{table}

\subsection{New Singly-Even Binary Self-Dual \BM{[64,32,12]} Code}

The possible weight enumerators of a singly-even binary self-dual $[64,32,12]$ code are given in \cite{42,37} as
	\begin{align*}
	W_{64,1}&=1+(1312+16\alpha)x^{12}+(22016-64\alpha)x^{14}+\cdots,\\
	W_{64,2}&=1+(1312+16\alpha)x^{12}+(23040-64\alpha)x^{14}+\cdots,
	\end{align*}
where $\alpha\in[14\zint 284]$ for $W_{64,1}$ and $\alpha\in[0\zint 277]$ for $W_{64,2}$. The existence of codes with weight enumerator $W_{64,2}$ has previously been determined for
	\begin{enumerate}[label=]
	\item $\alpha\in\{0,\lb...,\lb42,\lb44,\lb...,\lb52,\lb54,\lb...,\lb58,\lb60,\lb62,\lb64,\lb65,\lb69,\lb72,\lb80,\lb88,\lb96,\lb104,\lb108,\lb112,\lb114,\lb118,\lb120,\lb184\}$
	\end{enumerate}	
(see \cite{40,41,39,25,26}).

We obtain one new extremal singly-even binary self-dual code of length 64 which has weight enumerator $W_{64,2}$ for
	\begin{enumerate}[label=]
	\item $\alpha=53$.
	\end{enumerate}	

The new code is constructed by applying Theorem \ref{thm-1} over $\FF_4$ (Table \ref{table-64-1}).

	\begin{table}[h!]
	\caption{New singly-even binary self-dual $[64,32,12]$ code from Theorem \ref{thm-1} over $\FF_4$.}\label{table-64-1}
	\centering
	\begin{adjustbox}{max width=\textwidth}
	\footnotesize
	\begin{tabular}{ccccccc}\midrule
	$\mathcal{C}_{64,i}$ & $\vct{a}$ & $\vct{b}$ & $\vct{c}$ & $W_{64,j}$ & $\alpha$ & $|\aut{\mathcal{C}_{64,i}}|$\\\midrule
	1 & \texttt{(23113202)} & \texttt{(10112022)} & \texttt{(33100231)} & 2 & $53$ & $2^3$\\\midrule
	\end{tabular}
	\end{adjustbox}
	\end{table}

\subsection{New Singly-Even Binary Self-Dual \BM{[80,40,14]} Codes}

The weight enumerator of a singly-even binary self-dual $[80,40,14]$ code is given in \cite{23} as
	\begin{align*}
	W_{80}=1+(3200+4\alpha)x^{14}+(47645-8\alpha+256\beta)x^{16}+\cdots,
	\end{align*}
where $\alpha,\beta\in\ZZ$. The existence of codes with weight enumerator $W_{80}$ has previously been determined for
	\begin{enumerate}[label=]
	\item $\beta=0$ and $\alpha\in\{-z:z=34,\lb51,\lb68,\lb85,\lb102,\lb119,\lb136,\lb153,\lb160,\lb170,\lb180,\lb187,\lb200,\lb204,\lb220,\lb221,\lb238,\lb240,\lb255,\lb260,\lb272,\lb289,\lb300,\lb306,\lb323,\lb340,\lb357,\lb374,\lb391,\lb408,\lb425,\lb459\}$;
	\item $\beta=1$ and $\alpha\in\{-6z:z=16,\lb25,\lb28,\lb31,\lb34,\lb37,\lb40,\lb43,\lb52\}$;
	\item $\beta=10$ and $\alpha\in\{-2z:z=102,\lb138,\lb140,\lb147,\lb165,\lb174,\lb180,\lb183,\lb200,\lb210,\lb220\}$;
	\item $\beta=18$ and $\alpha\in\{-z:z=211,\lb229,\lb249,\lb256,\lb274,\lb287,\lb306,\lb310,\lb325,\lb355,\lb363,\lb401\}$
	\end{enumerate}	
(see \cite{24,23,11,25,26,2}).

We obtain 50 new best known singly-even binary self-dual codes of length 80 which have weight enumerator $W_{80}$ for
	\begin{enumerate}[label=]
	\item $\beta=0$ and $\alpha\in\{-z:z=400,\lb420,\lb440,\lb460\}$;
	\item $\beta=1$ and $\alpha\in\{-6z:z=26\}$;
	\item $\beta=\textbf{2}$ and $\alpha\in\{-4z:z=53,\lb54,\lb55,\lb59,\lb60,\lb64,\lb66,\lb67,\lb72,\lb73,\lb74\}$;
	\item $\beta=\textbf{4}$ and $\alpha\in\{-4z:z=41,\lb54,\lb65,\lb68,\lb88\}$;
	\item $\beta=\textbf{5}$ and $\alpha\in\{-10z:z=19,\lb...,\lb22,\lb24,\lb25,\lb26,\lb29\}$;
	\item $\beta=\textbf{6}$ and $\alpha\in\{-4z:z=66,\lb69,\lb71,\lb77,\lb79,\lb81,\lb82,\lb89,\lb91\}$;
	\item $\beta=\textbf{8}$ and $\alpha\in\{-4z:z=69,\lb73,\lb88,\lb92\}$;
	\item $\beta=10$ and $\alpha\in\{-2z:z=190,\lb230,\lb240,\lb260\}$;
	\item $\beta=18$ and $\alpha\in\{-z:z=384,\lb376,\lb364,\lb360\}$.
	\end{enumerate}

Until this point, no singly-even binary self-dual $[80,40,14]$ codes with weight enumerator $W_{80}$ for $\beta\in\{2,4,5,6,8\}$ had been known to exist.

Of the 50 new codes, 4 are constructed by applying Theorem \ref{thm-1} over $\FF_2$ (Table \ref{table-80-1}); 3 are constructed by applying Theorem \ref{thm-1} over $\FF_2+u\FF_2$ (Table \ref{table-80-2}); 34 are constructed by applying Theorem \ref{thm-1} over $\FF_2+u\FF_2+v\FF_2+uv\FF_2$ (Table \ref{table-80-3}) and 9 are constructed by applying Theorem \ref{thm-1} over $\FF_4$ (Table \ref{table-80-4}).
	
	\begin{table}[h!]
	\caption{New singly-even binary self-dual $[80,40,14]$ codes from Theorem \ref{thm-1} over $\FF_2$.}\label{table-80-1}
	\centering
	\begin{adjustbox}{max width=\textwidth}
	\footnotesize
	\begin{tabular}{ccccccc}\midrule
	$\mathcal{C}_{80,i}$ & $\vct{a}$ & $\vct{b}$ & $\vct{c}$ & $\alpha$ & $\beta$ & $|\aut{\mathcal{C}_{80,i}}|$\\\midrule
	1 & \texttt{(00110110101100111001)} & \texttt{(01111111101101111110)} & \texttt{(01111111000111111101)} & $-440$ & 0 & $2^{3}\cdot 5$\\
	2 & \texttt{(01010110110001001110)} & \texttt{(00101000110101010011)} & \texttt{(00111000100010100010)} & $-400$ & 0 & $2^{3}\cdot 5$\\
	3 & \texttt{(01100011100111001011)} & \texttt{(11100111100000100110)} & \texttt{(10100000001010001000)} & $-520$ & 10 & $2^{4}\cdot 3\cdot 5$\\
	4 & \texttt{(11011010010111001011)} & \texttt{(11110111101100100110)} & \texttt{(01011111001010000001)} & $-380$ & 10 & $2^{2}\cdot 5$\\\midrule
	\end{tabular}
	\end{adjustbox}
	\end{table}
	
	\begin{table}[h!]
	\caption{New singly-even binary self-dual $[80,40,14]$ codes from Theorem \ref{thm-1} over $\FF_2+u\FF_2$.}\label{table-80-2}
	\centering
	\begin{adjustbox}{max width=\textwidth}
	\footnotesize
	\begin{tabular}{ccccccccc}\midrule
	$\mathcal{C}_{80,i}$ & $\lambda$ & $\mu$ & $\vct{a}$ & $\vct{b}$ & $\vct{c}$ & $\alpha$ & $\beta$ & $|\aut{\mathcal{C}_{80,i}}|$\\\midrule
	5 & \texttt{1} & \texttt{1} & \texttt{(1012003233)} & \texttt{(1313102320)} & \texttt{(1212130203)} & $-420$ & 0 & $2^{2}\cdot 5$\\
	6 & \texttt{1} & \texttt{1} & \texttt{(2232033031)} & \texttt{(3023203111)} & \texttt{(0303023332)} & $-480$ & 10 & $2^{4}\cdot 5$\\
	7 & \texttt{1} & \texttt{1} & \texttt{(0130132012)} & \texttt{(0321112301)} & \texttt{(2020202320)} & $-460$ & 10 & $2^{2}\cdot 5$\\\midrule
	\end{tabular}
	\end{adjustbox}
	\end{table}
	
	\begin{table}[h!]
	\caption{New singly-even binary self-dual $[80,40,14]$ codes from Theorem \ref{thm-1} over $\FF_2+u\FF_2+v\FF_2+uv\FF_2$.}\label{table-80-3}
	\centering
	\begin{adjustbox}{max width=\textwidth}
	\footnotesize
	\begin{tabular}{ccccccccc}\midrule
	$\mathcal{C}_{80,i}$ & $\lambda$ & $\mu$ & $\vct{a}$ & $\vct{b}$ & $\vct{c}$ & $\alpha$ & $\beta$ & $|\aut{\mathcal{C}_{80,i}}|$\\\midrule
	8 & \texttt{B} & \texttt{3} & \texttt{(92EB2)} & \texttt{(00337)} & \texttt{(00030)} & $-156$ & 1 & $2^{3}\cdot 3$\\
	9 & \texttt{5} & \texttt{D} & \texttt{(51F8F)} & \texttt{(F5BF9)} & \texttt{(E9ECC)} & $-296$ & \textbf{2} & $2^{2}$\\
	10 & \texttt{B} & \texttt{B} & \texttt{(51B78)} & \texttt{(D7191)} & \texttt{(EC461)} & $-292$ & \textbf{2} & $2^{2}$\\
	11 & \texttt{9} & \texttt{1} & \texttt{(3FFFF)} & \texttt{(1813B)} & \texttt{(0440F)} & $-288$ & \textbf{2} & $2^{2}$\\
	12 & \texttt{7} & \texttt{7} & \texttt{(5793F)} & \texttt{(585D3)} & \texttt{(34A2C)} & $-268$ & \textbf{2} & $2^{2}$\\
	13 & \texttt{B} & \texttt{9} & \texttt{(11B93)} & \texttt{(D1BF8)} & \texttt{(3A00A)} & $-264$ & \textbf{2} & $2^{2}$\\
	14 & \texttt{F} & \texttt{7} & \texttt{(FDB18)} & \texttt{(991BD)} & \texttt{(64B46)} & $-256$ & \textbf{2} & $2^{2}$\\
	15 & \texttt{1} & \texttt{9} & \texttt{(B3D1D)} & \texttt{(3D8D3)} & \texttt{(EA9AE)} & $-240$ & \textbf{2} & $2^{2}$\\
	16 & \texttt{F} & \texttt{7} & \texttt{(55B39)} & \texttt{(35D8D)} & \texttt{(E2D2E)} & $-236$ & \textbf{2} & $2^{2}$\\
	17 & \texttt{D} & \texttt{D} & \texttt{(8B95F)} & \texttt{(5FDD9)} & \texttt{(2D244)} & $-220$ & \textbf{2} & $2^{2}$\\
	18 & \texttt{7} & \texttt{F} & \texttt{(F95FF)} & \texttt{(5F93C)} & \texttt{(14C4C)} & $-216$ & \textbf{2} & $2^{3}$\\
	19 & \texttt{7} & \texttt{3} & \texttt{(B8B3D)} & \texttt{(FDB9B)} & \texttt{(C3C4C)} & $-212$ & \textbf{2} & $2^{2}$\\
	20 & \texttt{5} & \texttt{7} & \texttt{(53710)} & \texttt{(93999)} & \texttt{(2A292)} & $-352$ & \textbf{4} & $2^{2}$\\
	21 & \texttt{D} & \texttt{D} & \texttt{(D8D1D)} & \texttt{(1BB77)} & \texttt{(3C6EC)} & $-272$ & \textbf{4} & $2^{2}$\\
	22 & \texttt{D} & \texttt{5} & \texttt{(5757D)} & \texttt{(97358)} & \texttt{(E070E)} & $-260$ & \textbf{4} & $2^{2}$\\
	23 & \texttt{1} & \texttt{F} & \texttt{(28FDB)} & \texttt{(E0FE5)} & \texttt{(2AA9A)} & $-216$ & \textbf{4} & $2^{2}$\\
	24 & \texttt{F} & \texttt{B} & \texttt{(5A4B6)} & \texttt{(8D538)} & \texttt{(B66EE)} & $-164$ & \textbf{4} & $2^{2}$\\
	25 & \texttt{3} & \texttt{B} & \texttt{(35853)} & \texttt{(7BFD9)} & \texttt{(0E603)} & $-364$ & \textbf{6} & $2^{2}$\\
	26 & \texttt{3} & \texttt{7} & \texttt{(975B5)} & \texttt{(8D137)} & \texttt{(0C405)} & $-356$ & \textbf{6} & $2^{2}$\\
	27 & \texttt{F} & \texttt{7} & \texttt{(73D97)} & \texttt{(38397)} & \texttt{(0A205)} & $-328$ & \textbf{6} & $2^{2}$\\
	28 & \texttt{5} & \texttt{D} & \texttt{(1DF31)} & \texttt{(9D565)} & \texttt{(600E5)} & $-324$ & \textbf{6} & $2^{2}$\\
	29 & \texttt{F} & \texttt{7} & \texttt{(9F131)} & \texttt{(5851F)} & \texttt{(44CC5)} & $-316$ & \textbf{6} & $2^{2}$\\
	30 & \texttt{1} & \texttt{9} & \texttt{(BB773)} & \texttt{(11383)} & \texttt{(CC8D8)} & $-308$ & \textbf{6} & $2^{2}$\\
	31 & \texttt{5} & \texttt{D} & \texttt{(33833)} & \texttt{(D5917)} & \texttt{(6E272)} & $-284$ & \textbf{6} & $2^{2}$\\
	32 & \texttt{1} & \texttt{9} & \texttt{(3FB5D)} & \texttt{(53835)} & \texttt{(22CDC)} & $-276$ & \textbf{6} & $2^{2}$\\
	33 & \texttt{3} & \texttt{B} & \texttt{(D13F1)} & \texttt{(9A95F)} & \texttt{(62126)} & $-264$ & \textbf{6} & $2^{2}$\\
	34 & \texttt{1} & \texttt{9} & \texttt{(B8BBB)} & \texttt{(175F9)} & \texttt{(AAEDE)} & $-368$ & \textbf{8} & $2^{2}$\\
	35 & \texttt{B} & \texttt{B} & \texttt{(97DB7)} & \texttt{(87575)} & \texttt{(E6CDC)} & $-352$ & \textbf{8} & $2^{2}$\\
	36 & \texttt{F} & \texttt{9} & \texttt{(77F7F)} & \texttt{(81DBF)} & \texttt{(E7EEE)} & $-292$ & \textbf{8} & $2^{2}$\\
	37 & \texttt{3} & \texttt{B} & \texttt{(D7383)} & \texttt{(1F5B9)} & \texttt{(474C4)} & $-276$ & \textbf{8} & $2^{2}$\\
	38 & \texttt{5} & \texttt{B} & \texttt{(7F4FF)} & \texttt{(5717B)} & \texttt{(8888D)} & $-384$ & 18 & $2^{2}$\\
	39 & \texttt{1} & \texttt{9} & \texttt{(BB71B)} & \texttt{(FF109)} & \texttt{(6F666)} & $-376$ & 18 & $2^{2}$\\
	40 & \texttt{F} & \texttt{7} & \texttt{(89F17)} & \texttt{(FF993)} & \texttt{(266A7)} & $-364$ & 18 & $2^{2}$\\
	41 & \texttt{5} & \texttt{D} & \texttt{(15D2D)} & \texttt{(17733)} & \texttt{(252E6)} & $-360$ & 18 & $2^{2}$\\\midrule
	\end{tabular}
	\end{adjustbox}
	\end{table}
	
	\begin{table}[h!]
	\caption{New singly-even binary self-dual $[80,40,14]$ codes from Theorem \ref{thm-1} over $\FF_4$.}\label{table-80-4}
	\centering
	\begin{adjustbox}{max width=\textwidth}
	\footnotesize
	\begin{tabular}{ccccccc}\midrule
	$\mathcal{C}_{80,i}$ & $\vct{a}$ & $\vct{b}$ & $\vct{c}$ & $\alpha$ & $\beta$ & $|\aut{\mathcal{C}_{80,i}}|$\\\midrule
	42 & \texttt{(2023113102)} & \texttt{(1313111210)} & \texttt{(1110131210)} & $-460$ & 0 & $2^{2}\cdot 5$\\
	43 & \texttt{(3121330000)} & \texttt{(2033021032)} & \texttt{(0320320302)} & $-290$ & \textbf{5} & $2\cdot 5$\\
	44 & \texttt{(0313022300)} & \texttt{(2111212100)} & \texttt{(1002013132)} & $-260$ & \textbf{5} & $2^{2}\cdot 5$\\
	45 & \texttt{(2231100000)} & \texttt{(1230010333)} & \texttt{(2001231310)} & $-250$ & \textbf{5} & $2^{2}\cdot 5$\\
	46 & \texttt{(2311211101)} & \texttt{(0100120321)} & \texttt{(0100010203)} & $-240$ & \textbf{5} & $2\cdot 5$\\
	47 & \texttt{(2312300112)} & \texttt{(0022222123)} & \texttt{(3333103120)} & $-220$ & \textbf{5} & $2\cdot 5$\\
	48 & \texttt{(0303110323)} & \texttt{(1231311133)} & \texttt{(0230132110)} & $-210$ & \textbf{5} & $2^{2}\cdot 5$\\
	49 & \texttt{(3213321031)} & \texttt{(0010131103)} & \texttt{(1203122220)} & $-200$ & \textbf{5} & $2\cdot 5$\\
	50 & \texttt{(0203231321)} & \texttt{(2031110022)} & \texttt{(0232110031)} & $-190$ & \textbf{5} & $2\cdot 5$\\\midrule
	\end{tabular}
	\end{adjustbox}
	\end{table}

\subsection{New Binary Self-Dual \BM{[92,46,16]} Codes}

The possible weight enumerators of a binary self-dual $[92,46,16]$ code are given in \cite{27} as
	\begin{align*}
	W_{92,1}&=1+(4692+4\alpha)x^{16}+(174800-8\alpha+256\beta)x^{18}\\&\quad+(2425488-52\alpha-2048\beta)x^{20}+\cdots,\\
	W_{92,2}&=1+(4692+4\alpha)x^{16}+(174800-8\alpha+256\beta)x^{18}\\&\quad+(2441872-52\alpha-2048\beta)x^{20}+\cdots,\\
	W_{92,3}&=1+(4692+4\alpha)x^{16}+(121296-8\alpha)x^{18}\\&\quad+(3213968-52\alpha)x^{20}+\cdots,
	\end{align*}
where $\alpha,\beta\in\ZZ$. The existence of codes with weight enumerator $W_{92,1}$ has previously been determined for
	\begin{enumerate}[label=]
	\item $\beta=-209$ and $\alpha\in\{3z:z=554,\lb569,\lb584,\lb614,\lb659,\lb689,\lb719,\lb869\}$;
	\item $\beta=-92$ and $\alpha\in\{23z:z=108\}$;
	\item $\beta=-69$ and $\alpha\in\{23z:z=88\}$;
	\item $\beta=-46$ and $\alpha\in\{23z:z=62,\lb64,\lb66,\lb67,\lb68,\lb70,\lb...,\lb78,\lb80,\lb81,\lb84,\lb86,\lb88\}$;
	\item $\beta=-23$ and $\alpha\in\{23z:z=46,\lb47,\lb49,\lb50,\lb52,\lb...,\lb56,\lb58,\lb...,\lb66,\lb68,\lb69\}$;
	\item $\beta=0$ and $\alpha\in\{23z:z=26,\lb33,\lb...,\lb56,\lb58,\lb60,\lb...,\lb63,\lb114\}$
	\end{enumerate}	
(see \cite{46,7,47,48,26}).

We obtain 12 new extremal binary self-dual codes of length 92 which have weight enumerator $W_{92,1}$ for
	\begin{enumerate}[label=]
	\item $\beta=-69$ and $\alpha\in\{23z:z=78\}$;
	\item $\beta=-46$ and $\alpha\in\{23z:z=61,\lb69,\lb79,\lb82,\lb85\}$;
	\item $\beta=-23$ and $\alpha\in\{23z:z=48,\lb57,\lb67,\lb76\}$;
	\item $\beta=0$ and $\alpha\in\{23z:z=31,\lb57\}$.
	\end{enumerate}

The new codes are constructed by applying Theorem \ref{thm-1} over $\FF_2$ (Table \ref{table-92-1}).

	\begin{table}[h!]
	\caption{New binary self-dual $[92,46,16]$ codes from Theorem \ref{thm-1} over $\FF_2$.}\label{table-92-1}
	\centering
	\begin{adjustbox}{max width=\textwidth}
	\footnotesize
	\begin{tabular}{cccccccc}\midrule
	$\mathcal{C}_{92,i}$ & $\vct{a}$ & $\vct{b}$ & $\vct{c}$ & $W_{92,j}$ & $\alpha$ & $\beta$ & $|\aut{\mathcal{C}_{92,i}}|$\\\midrule
	1 & \texttt{(00001001000010001001111)} & \texttt{(10101111101000110001110)} & \texttt{(00011010000001011011100)} & 1 & $1794$ & $-69$ & $2\cdot 23$\\
	2 & \texttt{(00001011001101011000110)} & \texttt{(00101011100111001111111)} & \texttt{(00000000000000000001000)} & 1 & $1403$ & $-46$ & $2\cdot 23$\\
	3 & \texttt{(10111011011110001100111)} & \texttt{(01000011000001010111101)} & \texttt{(11111001011011100000101)} & 1 & $1587$ & $-46$ & $2\cdot 23$\\
	4 & \texttt{(10111000101000001011000)} & \texttt{(01100110111111101001111)} & \texttt{(10101000100101111011101)} & 1 & $1817$ & $-46$ & $2\cdot 23$\\
	5 & \texttt{(11111010100111001110011)} & \texttt{(11110100110110011010011)} & \texttt{(00000000000000000100000)} & 1 & $1886$ & $-46$ & $2\cdot 23$\\
	6 & \texttt{(01011100110111010011100)} & \texttt{(01001000010010010101100)} & \texttt{(10001010011110100011111)} & 1 & $1955$ & $-46$ & $23$\\
	7 & \texttt{(10110110101001111101110)} & \texttt{(01100100101000011001110)} & \texttt{(01010110111011011000110)} & 1 & $1104$ & $-23$ & $23$\\
	8 & \texttt{(10000001101101000000011)} & \texttt{(10011010001000111111011)} & \texttt{(00000000000001000000000)} & 1 & $1311$ & $-23$ & $2\cdot 23$\\
	9 & \texttt{(00101001110111010101110)} & \texttt{(10000011110100111101010)} & \texttt{(00000000000000000010000)} & 1 & $1541$ & $-23$ & $2\cdot 23$\\
	10 & \texttt{(11011001100001010001000)} & \texttt{(00101000101111101010110)} & \texttt{(00100000000000000000000)} & 1 & $1748$ & $-23$ & $2\cdot 23$\\
	11 & \texttt{(00111010000001101000100)} & \texttt{(01100001101101000100001)} & \texttt{(11000001011111100101101)} & 1 & $713$ & $0$ & $2\cdot 23$\\
	12 & \texttt{(01101110111110110010111)} & \texttt{(00111011110010101111000)} & \texttt{(00000000000000010000000)} & 1 & $1311$ & $0$ & $2\cdot 23$\\\midrule
	\end{tabular}
	\end{adjustbox}
	\end{table}

\section{Conclusion}

In this work, we presented a technique for constructing self-dual codes which was derived as a modification of the four circulant construction utilising $\lambda$-circulant matrices. We proved the necessary conditions required for this technique to produce self-dual codes using a specialised mapping $\Theta$ related to $\lambda$-circulant matrices. We proved the ability of this technique by using it to construct many best known, optimal and extremal binary self-dual codes which were previously not known to exist. In particular, we have been able to construct new best known codes of length 56, a new optimal code of length 58, a new extremal code of length 64, many new best known codes of length 80, and 12 new extremal codes of length 92.

Because the search field of the given technique is considerably greater in size than that of the four circulant construction, the codes were obtained by random searches alone. As such there are potentially more new codes to be found by applying this technique (especially for lengths 80 and 92). On the other hand, by using $\Theta$ to reduce the number of computations required in our algorithms, we were able to increase the rate at which we could find self-dual codes. Due to computational limitations, we did not investigate constructing codes of lengths greater than 92 and so this could be a suggestion for future work. We could also investigate computationally feasible ways of classifying the codes produced by the technique for specific rings and lengths. Further still, we could investigate defining mappings similar to $\Theta$ whose implementation could potentially increase our computational efficiency even more. Another suggestion would be to consider different families of orthogonal matrices for $C$.
\bibliographystyle{plainnat}
\bibliography{paper1}

\begin{thebibliography}{30}
\providecommand{\natexlab}[1]{#1}
\providecommand{\url}[1]{\texttt{#1}}
\expandafter\ifx\csname urlstyle\endcsname\relax
  \providecommand{\doi}[1]{doi: #1}\else
  \providecommand{\doi}{doi: \begingroup \urlstyle{rm}\Url}\fi

\bibitem[Anev et~al.(2018)Anev, Harada, and Yankov]{40}
D.~Anev, M.~Harada, and N.~Yankov.
\newblock New extremal singly even self-dual codes of lengths 64 and 66.
\newblock \emph{{J}. {A}lgebra {C}omb. {D}iscrete {S}truct. {A}ppl.},
  5\penalty0 (3):\penalty0 143--151, 2018.
\newblock \doi{10.13069/jacodesmath.458601}.

\bibitem[Betsumiya et~al.(2003)Betsumiya, Georgiou, Gulliver, Harada, and
  Koukouvinos]{1}
K.~Betsumiya, S.~Georgiou, T.~A. Gulliver, M.~Harada, and C.~Koukouvinos.
\newblock On self-dual codes over some prime fields.
\newblock \emph{{D}iscrete {M}ath.}, 262\penalty0 (1--3):\penalty0 37--58,
  2003.
\newblock \doi{10.1016/S0012-365X(02)00520-4}.

\bibitem[Bouyukliev(2007)]{34}
I.~G. Bouyukliev.
\newblock What is {Q}-extension?
\newblock \emph{{S}erdica {J}. {C}omput.}, 1\penalty0 (2):\penalty0 115--130,
  2007.

\bibitem[Conway and Sloane(1990)]{42}
J.~H. Conway and N.~J.~A. Sloane.
\newblock A {N}ew {U}pper {B}ound on the {M}inimal {D}istance of {S}elf-{D}ual
  {C}odes.
\newblock \emph{{IEEE} {T}rans. {I}nform. {T}heory}, 36\penalty0 (6):\penalty0
  1319--1333, 1990.
\newblock \doi{10.1109/18.59931}.

\bibitem[Dontcheva et~al.(2001)Dontcheva, Russeva, and Ziapkov]{46}
R.~Dontcheva, R.~Russeva, and N.~Ziapkov.
\newblock On the binary extremal codes of length 92.
\newblock \emph{{``Proceedings of the International Workshop Optimal Codes and
  Related Topics'', Sunny Beach, Bulgaria}}, pages 53--58, 2001.

\bibitem[Dougherty(2017)]{3}
S.~T. Dougherty.
\newblock \emph{Algebraic {C}oding {T}heory {O}ver {F}inite {C}ommutative
  {R}ings}.
\newblock Springer International Publishing, Cham, Switzerland,
  1{${}^{\text{st}}$} edition, 2017.

\bibitem[Dougherty et~al.(1997)Dougherty, Gulliver, and Harada]{27}
S.~T. Dougherty, T.~A. Gulliver, and M.~Harada.
\newblock Extremal {B}inary {S}elf-{D}ual {C}odes.
\newblock \emph{{IEEE} {T}rans. {I}nform. {T}heory}, 43\penalty0 (6):\penalty0
  2036--2047, 1997.
\newblock \doi{10.1109/18.641574}.

\bibitem[Dougherty et~al.(1999)Dougherty, Gaborit, Harada, and Sol\'{e}]{6}
S.~T. Dougherty, P.~Gaborit, M.~Harada, and P.~Sol\'{e}.
\newblock Type {II} {C}odes over {$\FF_2+u\FF_2$}.
\newblock \emph{{IEEE} {T}rans. {I}nform. {T}heory}, 45\penalty0 (1):\penalty0
  32--45, 1999.
\newblock \doi{10.1109/18.746770}.

\bibitem[Dougherty et~al.(2010)Dougherty, Kim, Kulosman, and Liu]{33}
S.~T. Dougherty, J.-L. Kim, H.~Kulosman, and H.~Liu.
\newblock Self-dual codes over commutative {F}robenius rings.
\newblock \emph{{F}inite {F}ields {A}ppl.}, 16\penalty0 (1):\penalty0 14--26,
  2010.
\newblock \doi{10.1016/j.ffa.2009.11.004}.

\bibitem[Dougherty et~al.(2011)Dougherty, Yildiz, and Karadeniz]{35}
S.~T. Dougherty, B.~Yildiz, and S.~Karadeniz.
\newblock Codes over {$R_k$}, gray maps and their binary images.
\newblock \emph{{F}inite {F}ields {A}ppl.}, 17\penalty0 (3):\penalty0 205--219,
  2011.
\newblock \doi{10.1016/j.ffa.2010.11.002}.

\bibitem[Dougherty et~al.(2013)Dougherty, Yildiz, and Karadeniz]{36}
S.~T. Dougherty, B.~Yildiz, and S.~Karadeniz.
\newblock Self-dual codes over {$R_k$} and binary self-dual codes.
\newblock \emph{{E}ur. {J}. {P}ure {A}ppl. {M}ath.}, 6\penalty0 (1):\penalty0
  89--106, 2013.

\bibitem[Dougherty et~al.(2019)Dougherty, Gildea, Korban, Kaya, Tylyshchak, and
  Yildiz]{41}
S.~T. Dougherty, J.~Gildea, A.~Korban, A.~Kaya, A.~Tylyshchak, and B.~Yildiz.
\newblock Bordered constructions of self-dual codes from group rings and new
  extremal binary self-dual codes.
\newblock \emph{{F}inite {F}ields {A}ppl.}, 57:\penalty0 108--127, 2019.
\newblock \doi{10.1016/j.ffa.2019.02.004}.

\bibitem[Gaborit et~al.(2002)Gaborit, Pless, Sol\'{e}, and Atkin]{7}
P.~Gaborit, V.~Pless, P.~Sol\'{e}, and O.~Atkin.
\newblock Type {II} {C}odes over {$\FF_4$}.
\newblock \emph{{F}inite {F}ields {A}ppl.}, 8\penalty0 (2):\penalty0 171--183,
  2002.
\newblock \doi{10.1006/ffta.2001.0333}.

\bibitem[Gildea et~al.(2019)Gildea, Kaya, Tylyshchak, and Yildiz]{11}
J.~Gildea, A.~Kaya, A.~Tylyshchak, and B.~Yildiz.
\newblock A {G}roup {I}nduced {F}our-circulant {C}onstruction for {S}elf-dual
  {C}odes and {N}ew {E}xtremal {B}inary {S}elf-dual {C}odes, 2019.
\newblock \url{https://arxiv.org/abs/1912.11758}.

\bibitem[Gildea et~al.(2020{\natexlab{a}})Gildea, Kaya, Tylyshchak, and
  Yildiz]{12}
J.~Gildea, A.~Kaya, A.~Tylyshchak, and B.~Yildiz.
\newblock A modified bordered construction for self-dual codes from group
  rings.
\newblock \emph{{J}. {A}lgebra {C}omb. {D}iscrete {S}truct. {A}ppl.},
  7\penalty0 (2):\penalty0 103--119, 2020{\natexlab{a}}.
\newblock \doi{10.13069/jacodesmath.729402}.

\bibitem[Gildea et~al.(2020{\natexlab{b}})Gildea, Korban, Kaya, and Yildiz]{26}
J.~Gildea, A.~Korban, A.~Kaya, and B.~Yildiz.
\newblock Constructing self-dual codes from group rings and reverse circulant
  matrices.
\newblock \emph{{A}dv. {M}ath. {C}ommun.}, 2020{\natexlab{b}}.
\newblock \doi{10.3934/amc.2020077}.

\bibitem[Gildea et~al.(2020{\natexlab{c}})Gildea, Taylor, Kaya, and
  Tylyshchak]{25}
J.~Gildea, R.~Taylor, A.~Kaya, and A.~Tylyshchak.
\newblock Double bordered constructions of self-dual codes from group rings
  over {F}robenius rings.
\newblock \emph{{C}ryptogr. {C}ommun.}, 12\penalty0 (4):\penalty0 769--784,
  2020{\natexlab{c}}.
\newblock \doi{10.1007/s12095-019-00420-3}.

\bibitem[Gildea et~al.(2021)Gildea, Korban, and Roberts]{gmd}
J.~Gildea, A.~Korban, and A.~M. Roberts.
\newblock Generator matrix database, 2021.
\newblock \url{https://amr1-xps43dl7gt.netlify.app}.

\bibitem[Gulliver and Harada(2006)]{24}
T.~A. Gulliver and M.~Harada.
\newblock Classification of extremal double circulant self-dual codes of
  lengths 74--88.
\newblock \emph{{D}iscrete {M}ath.}, 306\penalty0 (17):\penalty0 2064--2072,
  2006.
\newblock \doi{10.1016/j.disc.2006.05.004}.

\bibitem[Harada(2018)]{38}
M.~Harada.
\newblock Binary extremal self-dual codes of length 60 and related codes.
\newblock \emph{{D}es. {C}odes {C}ryptogr.}, 86\penalty0 (5):\penalty0
  1085--1094, 2018.
\newblock \doi{10.1007/s10623-017-0380-2}.

\bibitem[Harada and Nishimura(2007)]{47}
M.~Harada and T.~Nishimura.
\newblock An extremal singly even self-dual code of length 88.
\newblock \emph{{A}dv. {M}ath. {C}ommun.}, 1\penalty0 (2):\penalty0 261--267,
  2007.
\newblock \doi{10.3934/amc.2007.1.261}.

\bibitem[Harada and Saito(2018)]{10}
M.~Harada and K.~Saito.
\newblock Singly even self-dual codes constructed from {H}adamard matrices of
  order 28.
\newblock \emph{{A}ustralas. {J}. {C}ombin.}, 70\penalty0 (2):\penalty0
  288--296, 2018.

\bibitem[Huffman(2005)]{37}
W.~C. Huffman.
\newblock On the classification and enumeration of self-dual codes.
\newblock \emph{{F}inite {F}ields {A}ppl.}, 11\penalty0 (3):\penalty0 451--490,
  2005.
\newblock \doi{10.1016/j.ffa.2005.05.012}.

\bibitem[Ling and Sol\'{e}(2001)]{8}
S.~Ling and P.~Sol\'{e}.
\newblock Type {II} {C}odes over {$\FF_4+u\FF_4$}.
\newblock \emph{{E}uropean {J}. {C}ombin.}, 22\penalty0 (7):\penalty0 983--997,
  2001.
\newblock \doi{10.1006/eujc.2001.0509}.

\bibitem[Mallows and Sloane(1973)]{5}
C.~L. Mallows and N.~J.~A. Sloane.
\newblock An {U}pper {B}ound for {S}elf-{D}ual {C}odes.
\newblock \emph{{I}nformation and {C}ontrol}, 22\penalty0 (2):\penalty0
  188--200, 1973.
\newblock \doi{10.1016/S0019-9958(73)90273-8}.

\bibitem[Rains(1998)]{4}
E.~M. Rains.
\newblock Shadow {B}ounds for {S}elf-{D}ual {C}odes.
\newblock \emph{{IEEE} {T}rans. {I}nform. {T}heory}, 44\penalty0 (1):\penalty0
  134--139, 1998.
\newblock \doi{10.1109/18.651000}.

\bibitem[Roberts(2020)]{2}
A.~M. Roberts.
\newblock Constructions of extremal and optimal self-dual and {H}ermitian
  self-dual codes over finite fields using circulant matrices.
\newblock Master's thesis, University of {C}hester, Chester, UK, 2020.
\newblock
  \url{https://drive.google.com/file/d/1CMjnuBvQtrXOY8foy6_gfXOcFFuHAaFs/view}.

\bibitem[Yankov and Anev(2019)]{39}
N.~Yankov and D.~Anev.
\newblock On the self-dual codes with an automorphism of order 5.
\newblock \emph{{A}ppl. {A}lgebra {E}ngrg. {C}omm. {C}omput.}, 2019.
\newblock \doi{10.1007/s00200-019-00403-0}.

\bibitem[Yankov et~al.(2017)Yankov, Anev, and G{\"{u}}rel]{23}
N.~Yankov, D.~Anev, and M.~G{\"{u}}rel.
\newblock Self-dual codes with an automorphism of order 13.
\newblock \emph{{A}dv. {M}ath. {C}ommun.}, 11\penalty0 (3):\penalty0 635--645,
  2017.
\newblock \doi{10.3934/amc.2017047}.

\bibitem[Yorgova and Wassermann(2008)]{48}
R.~Yorgova and A.~Wassermann.
\newblock Binary self-dual codes with automorphisms of order 23.
\newblock \emph{{D}es. {C}odes {C}ryptogr.}, 48\penalty0 (2):\penalty0
  155--164, 2008.
\newblock \doi{10.1007/s10623-007-9152-8}.

\end{thebibliography}
\end{document}